\theoremstyle{remark}{
\newtheorem{Def}{{\rm Definition}}
\newtheorem{Ex}{{\rm Example}}
\newtheorem{Rem}{{\rm Remark}}

}
\theoremstyle{plain}{

\newtheorem{Prop}{Proposition}
\newtheorem{Thm}{Theorem}
\newtheorem{MainThm}{Main Theorem}

\newtheorem{Fact}{Fact}
}
\begin{document}
\title[Simple fold maps with good properties on graph manifolds]{Characterizing families of graph manifolds via suitable classes of simple fold maps into the plane and embeddability of the Reeb spaces in some $3$-dimensional manifolds}
\author{Naoki Kitazawa}
\keywords{Fold maps: simple fold maps and round fold maps. Reeb spaces. Polyhedra. Graphs. Multibranched surfaces. Heegaard genera of $3$-dimensional manifolds.\\
\indent {\it \textup{2020} Mathematics Subject Classification}: Primary~57R45. Secondary~57Q05. ~57K20. ~57K30.}
\address{Institute of Mathematics for Industry, Kyushu University, 744 Motooka, Nishi-ku Fukuoka 819-0395, Japan\\
 TEL (Office): +81-92-802-4402 \\
 FAX (Office): +81-92-802-4405 \\
}
\email{n-kitazawa@imi.kyushu-u.ac.jp}
\urladdr{https://naokikitazawa.github.io/NaokiKitazawa.html}
\maketitle
\begin{abstract}
{\it Graph} manifolds form important classes of $3$-dimensional closed and orientable manifolds. For example, {\it Seifert} manifolds are graph manifolds where {\it hyperbolic} manifolds are not.

In applying singularity theory of differentiable maps to understanding global topologies of manifolds, graph manifolds have been shown to be characterized as ones admitting so-called {\it simple} {\it fold} maps into the plane of explicit classes by Saeki and the author. The present paper presents several related new results. 

{\it Fold} maps are higher dimensional variants of Morse functions and {\it simple} ones form simple classes, generalizing the class of general Morse functions. Such maps into the plane on $3$-dimensional closed and orientable manifolds induce quotient maps to so-called {\it simple polyhedra} with no {\it vertices}, which are $2$-dimensional. This is also closely related to the theory of {\it shadows} of $3$-dimensional manifolds. We also discuss invariants for graph manifolds via embeddability of these polyhedra in some $3$-dimensional manifolds.



\end{abstract}


\maketitle
\section{Introduction.}
\label{sec:1}
The class of {\it graph manifolds} is an important class of $3$-dimensional closed and orientable manifolds.
In short, a {\it graph} manifold is a manifold decomposed into total spaces of so-called circle bundles over compact and connected surfaces via tori in the manifold. Most of $3$-dimensional manifolds are so-called {\it hyperbolic} ones and graph manifolds are never hyperbolic. However, the class contains important manifolds such as a $3$-dimensional sphere, so-called {\it Lens spaces}, and so-called {\it Seifert} manifolds.

The present paper is on {\it fold} maps enjoying good properties on graph manifolds into the plane. {\it Fold} maps are higher dimensional variants of Morse functions.

By virtue of \cite{moise} for example, on $1$, $2$ and $3$-dimensional topological manifolds, there exist unique PL structures and differentiable structures. It is also fundamental that for a topological space homeomorphic to a $1$-dimensional or $2$-dimensional, it has a unique PL structure. These are so-called Hauptvermutung. 
The {\it canonical} PL structure for a smooth manifold means a well-known PL structure canonically obtained from the differentiable structure of the manifold and we omit rigorous expositions. We regard these facts as well-known here.

The {\it piecewise smooth category} is known to be equivalent to the PL category and we regard these two categories as essentially same ones: in our paper, "PL" and "piesewise smooth" are same and we will use "PL" mainly.

For a (smooth) manifold and more generally, a polyhedron $X$, $\dim X$ denotes its dimension.
\subsection{Fold maps.}
For a smooth manifold $X$, $T_pX$ denotes the tangent vector space at $p \in X$. For a smooth map $c:X \rightarrow Y$, $dc_p:T_pX \rightarrow T_{c(p)}Y$ denotes the differential at $p$. A {\it singular} point $p \in X$ means a point where the rank of the differential $dc_p:T_pX \rightarrow T_{c(p)}Y$ is smaller than $\min\{\dim X,\dim Y\}$. $S(c)$ denotes the set of all singular points of $c$ and we call this the {\it singular set} of $c$.

\begin{Def}
\label{def:0}
Let $m \geq n \geq 1$ be integers. A {\it fold} map $f$ on an $m$-dimensional closed and smooth manifold into an $n$-dimensional smooth manifold with no boundary is a smooth map such that at each singular point $f$ is of the form $$(x_1,\cdots,x_m) \mapsto (x_1,\cdots,x_{n-1},{\Sigma}_{j=1}^{m-n-i(p)+1} {x_{n-1+j}}^2-{\Sigma}_{j=1}^{i(p)} {x_{m-i(p)+j}}^2)$$ 
for suitable coordinates and an integer $0 \leq i(p) \leq \frac{m-n+1}{2}$.
\end{Def}

\begin{Prop}
For a fold map $f$, $i(p)$ is unique for any $p \in S(f)$ and defined as the {\rm index} of $p$.
Let $F_j(f) \subset S(f)$ denote the set of all singular points whose indices are $j$. $F_j(f)$ is a smooth closed submanifold with no boundary, making $f {\mid}_{F_j(f)}$ a smooth immersion. 
\end{Prop}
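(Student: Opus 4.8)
The plan is to argue locally, via the normal form of Definition~\ref{def:0}, and then patch. Fix a singular point $p$, together with coordinates $(x_1,\dots,x_m)$ on a neighborhood $U$ of $p$ and $(y_1,\dots,y_n)$ near $f(p)$ in which $f$ is the stated map; write its last component as $Q(x_n,\dots,x_m)={\Sigma}_{j=1}^{m-n-i(p)+1}{x_{n-1+j}}^2-{\Sigma}_{j=1}^{i(p)}{x_{m-i(p)+j}}^2$, a nondegenerate quadratic form in the $m-n+1$ variables $x_n,\dots,x_m$. Writing out the Jacobian matrix of $f$ on $U$, its rank drops below $n$ exactly where $\nabla Q=0$, i.e. on $S(f)\cap U=\{x_n=\cdots=x_m=0\}$, a coordinate $(n-1)$-plane; and $f$ restricted to it is $(x_1,\dots,x_{n-1})\mapsto(x_1,\dots,x_{n-1},0)$, a smooth embedding. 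So near each of its points $S(f)$ is a smooth $(n-1)$-dimensional submanifold on which $f$ is a local embedding, in particular an immersion.

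For uniqueness of $i(p)$, I would bring in the intrinsic second derivative of $f$ at a singular point $p$: on $K_p:=\ker df_p$, of dimension $m-n+1$, there is a well-defined symmetric bilinear form valued in the line $C_p:=T_{f(p)}Y/{\rm Im}\,df_p$, obtained by extending a kernel vector $v$ to a local vector field $\tilde v$, differentiating the section $df(\tilde v)$ of $f^{\ast}TY$ (which vanishes at $p$) in another direction, and projecting to $C_p$; the routine checks — independence of the extension $\tilde v$, and symmetry on $K_p\times K_p$ — make this intrinsic. Evaluated in the normal-form coordinates above it equals a nonzero multiple of the quadratic form $Q$ on $K_p=\{v:dx_1(v)=\cdots=dx_{n-1}(v)=0\}$, tensored with the generator $[\partial/\partial y_n]$ of $C_p$. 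Choosing an identification $C_p\cong\mathbb{R}$ turns this into an honest quadratic form in $m-n+1$ variables, well-defined up to a nonzero real factor; by Sylvester's law of inertia its pair of positive/negative indices $\bigl(m-n-i(p)+1,\ i(p)\bigr)$ is an invariant of the form up to the swap induced by the sign ambiguity, so $\min\{i(p),\,m-n+1-i(p)\}$ is a genuine invariant of $f$ at $p$. Because $0\le i(p)\le\frac{m-n+1}{2}$, this minimum is $i(p)$ itself, so $i(p)$ is determined by $f$ and $p$ alone. The same computation also shows the index is locally constant along $S(f)$: in a single normal-form chart every point of $S(f)\cap U$ carries the quadratic form $Q$, hence the index $i(p)$.

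Finally, $F_j(f)$ is the preimage of $j$ under the index function $S(f)\to\{0,1,2,\dots\}$, which is locally constant by the previous step; since only finitely many values occur, each $F_j(f)$ is open and closed in $S(f)$, i.e. a union of connected components of $S(f)$. As $S(f)$ is closed in $X$ (it is the locus where $df$ has rank $<n$), the first paragraph shows $S(f)$, and therefore $F_j(f)$, is a smooth $(n-1)$-dimensional submanifold of the closed manifold $X$ with no boundary, hence itself closed, and $f{\mid}_{F_j(f)}$ is an immersion because this was verified in every normal-form chart.

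The one genuinely delicate point I expect is the intrinsic character of the Hessian: one must verify that its value in $C_p$ is insensitive to the extension of a kernel vector to a vector field, and that the changes of source and target coordinates relating two normal forms at $p$ act on the resulting quadratic form only by the harmless rescaling of the one-dimensional cokernel. Everything else — the Jacobian computation, Sylvester's law, and the patching of local pieces — is routine.
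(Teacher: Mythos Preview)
Your argument is correct and complete: the local computation in the normal-form chart identifies $S(f)\cap U$ with a coordinate $(n-1)$-plane on which $f$ is an affine embedding, and the intrinsic second derivative $K_p\times K_p\to C_p$ (well defined precisely because ${\rm corank}\,df_p=1$) together with Sylvester's law pins down $i(p)$ once the normalization $0\le i(p)\le\frac{m-n+1}{2}$ is imposed. The local constancy and the patching to get $F_j(f)$ closed without boundary are routine, as you say.

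The paper, however, does not prove this proposition at all. It is stated as a standard structural fact about fold maps, and the reader is referred to \cite{golubitskyguillemin} (and implicitly to the Morse-theoretic background in \cite{milnor,milnor2}) a few lines later for ``such fundamental properties on fold maps.'' So there is no approach to compare against: you have supplied a self-contained proof where the paper simply cites the literature. Your route via the intrinsic Hessian is the standard one found in those references, and the ``delicate point'' you flag --- coordinate-independence of the cokernel-valued Hessian --- is exactly the content that the citation is meant to cover.
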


\begin{Def}
\label{def:1}
A fold map is said to be {\it simple} if each connected component of each preimage has at most one singular point. A fold map is said to be {\it strongly simple} if the restriction to its singular set is an embedding.  
\end{Def}
In the present paper, theory of Morse functions are fundamental and see \cite{milnor,milnor2} for example.
{\it Morse} functions are simplest fold maps. For general Morse functions at distinct singular points the singular values are distinct. They are also strongly simple fold maps.
For such fundamental properties on fold maps, see \cite{golubitskyguillemin} as an introductory book on the theory of singularities of differentiable maps including Morse functions and fold maps and see also \cite{thom,whitney} as pioneering works on fold maps and so-called {\it generic} smooth maps on smooth manifolds whose dimensions are greater than or equal to $2$ into the plane. \cite{saeki0} presents some pioneering studies on simple fold maps and manifolds admitting them. These studies are followed by studies closely related to our present study such as \cite{saeki2} and a recent work \cite{kitazawasaeki} for example. Related to this, we introduce {\it round} fold maps, introduced in \cite{kitazawa0.1,kitazawa0.2,kitazawa0.3} by the author and discussed further in \cite{kitazawa0.4,kitazawa0.5} by the author. 

${\mathbb{R}}^k$ denotes the $k$-dimensional Euclidean space where ${\mathbb{R}}^1=\mathbb{R}$. 
$\mathbb{N} \subset \mathbb{R}$ denotes the set of all positive integers.
We regard this as a natural smooth manifold and the Riemannian manifold, endowed with the standard Euclidean metric. $||x||$ denotes the distance between $x \in {\mathbb{R}}^k$ and the origin $0 \in {\mathbb{R}}^k$ or equivalently, the value of the standard norm at the vector $x$. $S^k:=\{x \in {\mathbb{R}}^{k+1} \mid ||x||=1.\}$ is the $k$-dimensional unit sphere for $k \geq 0$ and $D^k:=\{x \in {\mathbb{R}}^{k} \mid ||x|| \leq 1.\}$ is the $k$-dimensional unit disk for $k \geq 1$. The $k$-dimensional unit sphere is a $k$-dimensional smooth closed submanifold of ${\mathbb{R}}^{k+1}$ with no boundary and it is connected for $k \geq 1$. The $k$-dimensional unit disk is a smooth, compact and connected submanifold of ${\mathbb{R}}^k$.

\begin{Def}
	\label{def:2}
	Let $n \geq 2$ be an integer. A strongly simple fold map $f$ into ${\mathbb{R}}^n$ is said to be {\it round} if there exists an integer $l>0$ and a diffeomorphism ${\phi}_{{\mathbb{R}}^n}:{\mathbb{R}}^n \rightarrow {\mathbb{R}}^n$ such that ${\phi}_{{\mathbb{R}}^n}(f(S(f)))=\{x \in {\mathbb{R}}^k \mid 1 \leq ||x|| \leq l, ||x||  \in \mathbb{N}\}$.
\end{Def}

A canonical projection of some unit sphere into some Euclidean space mapping $(x_1,x_2) \in S^m \subset {\mathbb{R}}^{m+1}={\mathbb{R}}^n \times {\mathbb{R}}^{m-n+1}$ to $x_1$ is one of simplest round fold maps. This is also a kind of exercises on theory of Morse functions. A {\it round fold map into $\mathbb{R}$} is defined in \cite{kitazawa0.5} as a Morse function obtained by gluing two copies of a Morse function satisfying suitable good properties on the boundaries. We can see that if a manifold admits a round fold map into ${\mathbb{R}}^n$ for some integer $n \geq 2$, then it admits a round fold map into ${\mathbb{R}}^{n^{\prime}}$ for any $1 \leq n^{\prime}<n$. This is also a kind of such exercises.
\subsection{The Reeb spaces of maps and our Main Theorems.}
For a continuous map $c:X \rightarrow Y$, we can define an equivalence relation ${\sim}_c$ on $X$ by the following rule: $x_1 {\sim}_c x_2$ if and only if $x_1$ and $x_2$ are in a same connected component of a same preimage $c^{-1}(y)$ ($y \in Y$). 
\begin{Def}
\label{def:3}
We call the quotient space $W_c:=X/{\sim}_c$ the {\it Reeb space} of $c$.
\end{Def}
Hereafter, $q_c:X \rightarrow W_c$ denotes the canonically obtained quotient map and $\bar{c}$ denotes the map satisfying $c=\bar{c} \circ q_c$, which is well-defined, defined uniquely and continuous.
The following fact is fundamental and presented as a result we can obtain immediately from a main theorem and arguments in \cite{hiratukasaeki, shiota} for example. 

\begin{Fact}
\label{fact:1}
For a fold map $c:X \rightarrow Y$ between smooth manifolds with no boundaries and the canonical PL structures, we can regard $W_c$ as a polyhedron enjoying the following three.
\begin{enumerate}
\item The PL structure of $W_c$ here is induced canonically from $Y$ uniquely.
\item $q_c$ and $\bar{c}$ is regarded as a PL map. For suitable simplicial complexes compatible with the PL structures, these maps are simplicial maps.
\item For each point $p \in W_f-q_f(S(c))$, we can take a small regular neighborhood PL homeomorphic to the $\dim Y$-dimensional unit disk $D^{\dim Y}$.  
\end{enumerate}
\end{Fact}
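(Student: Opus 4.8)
The plan is to obtain this statement as a direct consequence of the triangulation theory of generic smooth maps together with the analysis of their Stein factorizations, i.e. the results of Shiota and of Hiratuka--Saeki cited in the statement. First I would observe that, since $X$ is closed (cf. Definition \ref{def:0}), the map $c$ is automatically proper, and that a fold map, being proper and having polynomial local normal forms, satisfies the hypotheses of those triangulation theorems. Applying them in the form recording the Stein factorization (which is precisely $W_c$, via the Hiratuka--Saeki triangulation of Stein factorizations), one obtains simplicial complexes triangulating $X$, $W_c$ and $Y$ simultaneously, with respect to which $q_c$ and $\bar{c}$ are simplicial maps. This already shows that $W_c$ carries the structure of a compact polyhedron and gives (2), the PL (rather than simplicial) assertions following by passing to the underlying PL structures.

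For (1) I would check that the PL structure just produced is the one canonically induced from $Y$ and is well defined independently of the auxiliary triangulations. On $W_c-q_c(S(c))$ the map $\bar{c}$ is a local homeomorphism: if $x \in X$ is a regular point, then $c$ is a submersion near $x$, so $q_c$ maps a small neighborhood of $x$ homeomorphically onto a neighborhood of $q_c(x)$, and $\bar{c}$ then maps the latter homeomorphically onto an open subset of $Y$; hence over this locus the PL structure is literally the pull-back of that of $Y$. Near a point of $q_c(S(c))$ one uses the explicit fold normal form of Definition \ref{def:0}: the germ of $W_c$ is the product of a Euclidean factor ${\mathbb{R}}^{n-1}$ with the Reeb space of the single quadratic function ${\Sigma}_{j} {y_j}^2-{\Sigma}_{j} {z_j}^2$ on ${\mathbb{R}}^{m-n+1}$, which is an explicit polyhedron (a cone over the link determined by the index $i(p)$), and one verifies that these local PL structures glue to the one on the regular part. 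Since the construction uses only $Y$ and the intrinsic local models of $c$, the resulting PL structure is canonical; the word ``uniquely'' is to be read in this sense, and when moreover $\dim Y \leq 3$ it is literally the unique PL structure on the underlying space by the Hauptvermutung recalled above.

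Item (3) is then immediate from the local homeomorphism property established in the previous paragraph: for $p=q_c(x) \in W_c-q_c(S(c))$ a sufficiently small regular neighborhood of $p$ is carried PL homeomorphically by $\bar{c}$ onto a small regular neighborhood of $c(x) \in Y$, which is PL homeomorphic to $D^{\dim Y}$.

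I expect the main obstacle to lie entirely on the side of the cited results: verifying that a fold map and, crucially, its Stein factorization (not merely its source and target) is triangulable, and that the triangulations can be chosen compatibly and so as to induce the given PL structure of $Y$ on $W_c$. This is exactly the content extracted from \cite{shiota} and \cite{hiratukasaeki}, which is why the statement is phrased as something ``obtained immediately'' from them rather than proved from first principles. The only remaining point requiring genuine (though routine) verification is the local computation at fold points -- identifying the germ of $W_c$ with the stated product-of-cone model built from the quadratic part of the normal form and confirming PL-compatibility of the gluings with the regular part.
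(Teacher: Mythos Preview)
Your proposal is correct and aligned with the paper's treatment: the paper gives no proof at all for Fact~\ref{fact:1}, simply introducing it as ``a result we can obtain immediately from a main theorem and arguments in \cite{hiratukasaeki, shiota},'' which is exactly the route you outline. Your sketch of how to extract items (1)--(3) from those triangulation results (properness, the Stein factorization viewpoint, the local homeomorphism on the regular part, and the fold normal form near $q_c(S(c))$) is a faithful and somewhat more detailed unpacking of what the paper leaves implicit.
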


We also add that for $m>n=2$ in Definition \ref{def:1}, $W_c$ has been shown to be a polyhedron for a so-called {\it stable} fold map in \cite{kobayashisaeki}. This is also shown for {\it stable} maps there.
Note also that any fold map is, by a suitable slight perturbation, deformed to be a {\it stable} fold map. \cite{golubitskyguillemin} also presents the topology of the space of all smooth maps between given two smooth manifolds with no boundaries, which is important in defining the notion of a {\it stable} map, and fundamental and sophisticated theory of stable maps, for example. Last we add that \cite{reeb} is a pioneering paper on Reeb spaces for smooth (Morse) functions.

\begin{Ex}
	For Morse functions on closed manifolds such that at distinct singular points the values are distinct, which are strongly simple, the Reeb space is regarded as a graph.
	Such functions are known to exist densely.
	Strongly simple fold maps are simple and stable. For a canonical projection of the unit sphere $S^m$ into ${\mathbb{R}}^n$ with $m>n$, which is round, the Reeb space is the $n$-dimensional unit disk $D^n$. 

\end{Ex}
A {\it PL} (smooth) bundle means a bundle whose fiber is a polyhedron (resp. smooth manifold) and whose structure group consists of PL homeomorphisms (resp. smooth diffeomorphisms). Hereafter, a {\it diffeomorphism} is smooth or of the class $C^{\infty}$.
\begin{Fact}
\label{fact:2}
A simple fold map $f$ can be also defined as a fold map such that $q_f {\mid}_{S(f)}$ is injective. Furthermore, for each connected component $C$ of $q_f(S(f))$, a regular beighborhood $N(C)$ has the structure of a PL bundle over $C$ whose fiber is PL homeomorphic to $[-1,1]$ or $\{(r \cos \theta,r \sin \theta) \mid 0 \leq r \leq 1, \theta=0,\frac{2}{3}\pi,\frac{4}{3}\pi \} \subset {\mathbb{R}}^2$.
\end{Fact}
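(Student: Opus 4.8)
The plan is to prove the two assertions in turn. For the characterization of simplicity I would just unravel the relation $\sim_f$: two singular points $p_1,p_2 \in S(f)$ satisfy $q_f(p_1)=q_f(p_2)$ precisely when they lie in one connected component of one preimage of $f$, so $q_f{\mid}_{S(f)}$ is injective if and only if no connected component of any preimage of $f$ contains two distinct singular points, which is exactly the defining property of a simple fold map. This step is immediate, and from here on I would use that $q_f{\mid}_{S(f)}$ is injective whenever $f$ is simple.

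For the structure of $N(C)$, I would first note that, as follows from the preceding Proposition, the index is locally constant on $S(f)$; hence every connected component of $S(f)$ lies in a single $F_j(f)$ and is a closed $(n-1)$-dimensional manifold on which the index equals a fixed value $j$, and $q_f$ carries such a component $C^\prime$ PL homeomorphically onto $C$. It then suffices to describe a small regular neighborhood of $C$ in $W_f$, and I would pull the question back to $X$. For a sufficiently small regular neighborhood $N(C^\prime)$ of $C^\prime$ in $X$ --- here simplicity enters again, guaranteeing that the image of $C^\prime$ is isolated in $W_f$ from the images of the other components of $S(f)$, so that $C$ is embedded rather than self-glued --- the set $q_f(N(C^\prime))$ is a regular neighborhood of $C$ in $W_f$, and so the Reeb space of $f{\mid}_{N(C^\prime)}$ is the object to be identified. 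By the normal form of a fold map along its singular set, $N(C^\prime)$ is the total space of a PL $D^{m-n+1}$-bundle over $C^\prime$ such that, over each small disk of $C^\prime$, $f$ is PL equivalent to the product of the identity with the restriction to $D^{m-n+1}$ of a nondegenerate quadratic form $Q_j$ of index $j$.

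The core of the argument is then the Reeb space of $Q_j{\mid}_{D^{m-n+1}}$ and its assembly over $C^\prime$. The form $Q_j$ has a single critical point; its level $Q_j^{-1}(0)$ is connected, while for $j=0$ only one side of the critical value is nonempty (giving directly an arc $[-1,1]$), and for $j\ge 1$ each of the two levels $Q_j^{-1}(t)$, $t>0$ and $t<0$, is connected except in the extremal cases $j=m-n$, respectively $j=1$, where it has two components. Thus for $j\ge 1$ this Reeb space is a cone on $c_++c_-$ points with $c_\pm\in\{1,2\}$, and $c_+=c_-=2$ occurs only when $j=1$ and $m=n+1$. In every case except this one the fiber is therefore $[-1,1]$ (one or two local sheets of $W_f$ incident to $C$) or the tripod $\{(r\cos\theta,r\sin\theta)\mid 0\le r\le 1,\ \theta=0,\frac{2}{3}\pi,\frac{4}{3}\pi\}$ (three local sheets), and the local models glue --- over the connected base $C^\prime$, using PL triviality of $N(C^\prime)\to C^\prime$ over contractible subsets --- into a PL bundle over $C$ whose structure group lies in the group of PL self-homeomorphisms of the fiber. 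In the remaining case $j=1$, $m=n+1$ I would leave the local model: across $C^\prime$ the regular preimages of $f$ (closed $(m-n)$-dimensional manifolds, here disjoint unions of circles) differ by attaching or deleting a single $1$-handle, so at least two of the four naive local sheets of $W_f$ collapse to one connected component of a regular preimage and hence contribute a single edge of $W_f$; the fiber of $N(C)\to C$ is then again $[-1,1]$ or the tripod.

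I expect this last point to be the main obstacle. Outside the case $j=1$, $m=n+1$ the local normal form of a fold already displays the correct fiber, but in that case the germ of $W_f$ at a singular value collapses from the naive four-legged cross to the tripod (or an arc) only after one tracks how the two local sheets on one side close up in the ambient manifold; carrying this out is the bookkeeping of $1$-handle attachments on the regular preimages, and it is here --- together with the injectivity of $q_f{\mid}_{S(f)}$, which keeps $C$ embedded --- that simplicity genuinely plays a role.
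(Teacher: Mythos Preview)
The paper does not prove this Fact: it is stated as known, and the detailed version (Proposition~\ref{prop:2}) is attributed to section~2 of \cite{kobayashisaeki}. So there is no in-paper argument to compare against, and I will simply assess your attempt on its own.

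Your handling of the first assertion is correct. For the second there is a gap in the framing, though the conclusion survives. You equate a regular neighborhood of $C$ in $W_f$ with the Reeb space of $f|_{N(C')}$, and then compute the latter fibrewise as the Reeb space of $Q_j|_{D^{m-n+1}}$. These two spaces are not the same: the Reeb space of $f|_{N(C')}$ records connected components of $f^{-1}(v)\cap N(C')$, while the germ of $W_f$ along $C$ records connected components of the \emph{global} fibre $f^{-1}(v)$ that meet $N(C')$; there is only a natural surjection from the former onto the latter. You recognize exactly this collapsing in your final paragraph for $m=n+1$, $j=1$, but it occurs already for $j=1$ with $m>n+1$: the two local components of $\{Q_1<0\}$ may lie in a single global fibre component (for instance when the singular fibre component is an $(m-n)$-sphere with two points identified), so that your local tripod becomes an arc in $W_f$. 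What the normal-slice computation actually yields is an \emph{upper bound} on the number of legs of the fibre of $N(C)\to C$, not its exact value. That is still sufficient --- outside the special case you have $c_++c_-\le 3$, and passing to $W_f$ can only merge legs; in the special case your $1$-handle argument (if the two attaching points lie on distinct circles the handle merges them, if on one circle it produces one or two circles) correctly rules out four legs. The fix is to phrase the penultimate paragraph as an inequality rather than an equality.
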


\begin{Def}
	\label{def:4}
	In fact \ref{fact:2}, if the PL bundle $N(C)$ is chosen as a trivial one for any $C$, then $f$ is said to be {\it normal}.
\end{Def}

\begin{MainThm}
	\label{mthm:1}
	A graph manifold $M$ always admits a normal simple fold map $f$ into the plane ${\mathbb{R}}^2$ enjoying the following properties.
	\begin{enumerate}
		\item $W_f$ is PL homeomorphic to a $2$-dimensional polyhedron obtained in the following way.
		\begin{enumerate}
			\item Choose finitely many Reeb spaces of {\rm bordered doubled most standard maps induced from $S^m$ into the plane} or {\rm supporting pairs of pants}, defined in Definitions \ref{def:11} and \ref{def:12}.
			\item Do a finite iteration of choosing a pair of {\rm borders} for the previous Reeb spaces which are in mutually distinct Reeb spaces before and gluing them directly via PL homeomorphisms or attaching Reeb spaces of {\rm S-maps supporting annuli} along the {\rm borders}, defined in Definition \ref{def:13}.
		\end{enumerate} 
		\item $\bar{f}$ is represented as the composition of a PL embedding into ${\mathbb{R}}^3$ with the canonical projection to ${\mathbb{R}}^2$.
	\end{enumerate}
\end{MainThm}

In the next section, we introduce existing characterizations of graph manifolds via explicit simple fold maps into ${\mathbb{R}}^2$ by Saeki and the author \cite{kitazawasaeki, saeki2} as Theorem \ref{thm:1}. Main Theorem \ref{mthm:1} and Main Theorem \ref{mthm:2}, which will be presented here, are new characterizations. Main Theorem \ref{mthm:2} needs the following notion. For a round fold map into ${\mathbb{R}}^n={\mathbb{R}}^2$, we have a PL bundle over $S^1$ whose projection is the map from 
 ${({\phi}_{{\mathbb{R}}^2} \circ \bar{f})}^{-1}(\{x \in {\mathbb{R}}^2 \mid ||x|| \geq \frac{1}{2} \})$ onto $S^1$ mapping $x$ to $\frac{1}{||{({\phi}_{{\mathbb{R}}^2} \circ \bar{f})}(x)||} {({\phi}_{{\mathbb{R}}^2} \circ \bar{f})}(x)$ and fiber is a $1$-dimensional polyhedron in the situation of Definition \ref{def:2}. A {\it topologically quias-trivial} round fold map means a round fold map such that the bundle is trivial for a suitable diffeomorphism ${\phi}_{{\mathbb{R}}^n}={\phi}_{{\mathbb{R}}^2}$ in the situation of Definition \ref{def:2}. We present Main Theorem \ref{mthm:2}: a {\it representation graph} for a graph
 manifold is a useful graph in understanding the topology roughly and defined in Definition \ref{def:8}.
 \begin{MainThm}[Theorem \ref{thm:3}]
	\label{mthm:2}
	A graph manifold $M$ admits a topologically quasi-trivial round fold map into ${\mathbb{R}}^2$ such that the Reeb space can be embedded in $S^3$ in the PL category if and only if there exists a representation graph for $M$ being planar.
\end{MainThm}

\subsection{The content of the present paper}
In the next section, we introduce some important properties of simple fold maps and their Reeb spaces. We also introduce ones which are not presented in the present section. We also define {\it graph manifolds} and graphs we can define for graph manifolds ({\it representation graphs}). They are defined immediately from the structures of graph manifolds. This resembles and is different from graphs in \cite{neumann}. This graph has less information of the manifold than the graph of \cite{neumann}. However, this graph is easier to understand the definition and properties. After these presentations, we introduce characterizations of graph manifolds by simple fold maps satisfying additional good conditions in Theorem \ref{thm:1}. The third section is devoted to proofs of Main Theorems. Key tools are construction of explicit fold maps into surfaces on graph manifolds and similar construction on more general $3$-dimensional compact manifolds first used in \cite{saeki2} and later used in \cite{ishikawakoda, kitazawasaeki} for example. \cite{costantinothurston} is also a closely related study.
\section{Some important properties of simple fold maps, graph manifolds and existing characterizations of graph manifolds via simple fold maps satisfying additional good conditions.}

\subsection{Some important properties of simple fold maps.}

The following gives a rigorous exposition of Fact \ref{fact:2}, presented in the introduction or the first section.

\begin{Prop}
\label{prop:2}
For a simple fold map $f$ in {\rm Definition \ref{def:1}} with $m>n \geq 1$, $W_f$ is regarded as a polyhedron in {\rm Fact \ref{fact:1}} and enjoys the following properties.\\\
\begin{enumerate}
\item
\label{prop:2.1}
$W_f-q_f(S(f))$ is also regarded as an $n$-dimensional smooth manifold and the restriction of $\bar{f}$ here is a smooth immersion. 
\item
\label{prop:2.2}
For each connected component of $W_f-q_f(S(f))$, the closure is the disjoint union of the connected component and some connected components of $q_f(S(f))$ and also regarded as a compact and smooth manifold whose interior is a smooth submanifold of the smooth manifold $W_f-q_f(S(f))$ before. The restriction of $\bar{f}$ on the closure before is also a smooth immersion.
\item
\label{prop:2.3}
For each connected component $C_j \subset q_f(S(f))$, we can take some small regular neighborhood $N(C_j)$ satisfying either of the following conditions. 
\begin{enumerate}
\item
\label{prop:2.3.1}
$N(C_j)$ is regarded as the total space of a trivial PL bundle over $C_j$ whose fiber is a closed interval $I_j:=[0,1]$, giving the identification of $C_j \subset q_f(S(f))$ with $C_j \times \{0\} \subset N(C_j)$ via the map mapping $x$ to $(x,0)$. Furthermore, the restriction of $\bar{f}$ to the restriction of the trivial bundle $N(C_j)$ over an arbitrary small closed interval $I_{C_j,j}$ smoothly embedded in $C_j$ is regarded as the product map of two PL homeomorphisms between closed intervals {\rm (}onto the product of the small closed interval smoothly embedded in the manifold $N$ of the target and the image of the singular set and the closed interval $I_j$ for suitable coordinates{\rm )}.
\item
\label{prop:2.3.2}
$N(C_j)$ is regarded as the total space of a trivial PL bundle over $C_j$ whose fiber is a closed interval $I_j:=[-1,1]$, giving the identification of $C_j \subset q_f(S(f))$ with $C_j\times \{0\} \subset N(C_j)$ via the map mapping $x$ to $(x,0)$. Furthermore, the restriction of $\bar{f}$ to the restriction of the trivial bundle $N(C_j)$ over an arbitrary small closed interval $I_{C_j,j}$ smoothly embedded in $C_j$ is regarded as the product map of two PL homeomorphisms between closed intervals {\rm (}onto the product of the small closed interval smoothly embedded in the manifold $N$ of the target and the image of the singular set and the closed interval $I_j$ for suitable coordinates{\rm )}.
\item 
\label{prop:2.3.3}
$N(C_j)$ is regarded as the total space of a PL bundle over $C_j$ whose fiber is $K_j:=\{(r \cos t, r \sin t) \mid 0 \leq r \leq 1, t=0,\frac{2}{3}\pi, \frac{4}{3}\pi.\} \subset {\mathbb{R}}^2$, giving the identification of $C_j$ with $C_j \times \{(0,0)\} \subset N(C_j)$ via the map mapping $x$ to $(x,(0,0))$ and the following two hold.
\begin{enumerate}
\item
\label{prop:2.3.3.1}
 The structure group of the bundle is trivial or of order $2$. In the latter case, the group acts on $K_j$ fixing the set $\{(r,0)\mid 0 \leq r \leq 1.\}$ and having the non-trivial element mapping $(r \cos \frac{2}{3} \pi, r \sin \frac{2}{3} \pi)$ to $(r \cos \frac{4}{3} \pi, r \sin \frac{4}{3} \pi)$ for $0 \leq r \leq 1$. 
\item
\label{prop:2.3.3.2}
 The restriction of $\bar{f}$ to the restriction of the trivial bundle $N(C_j)$ over the arbitrary small closed interval $I_{C_j,j}$ smoothly embedded in $C_j$ is regarded as the surjective product map of the two following maps {\rm (}onto the product of the small closed interval smoothly embedded in the manifold $N$ of the target and the image of the singular set and the $1$-dimensional connected submanifold $\{(r,0) \mid -\frac{1}{2} \leq r \leq 1\}$ for suitable coordinates{\rm )}.
\begin{enumerate}
\item A PL homeomorphism between closed intervals.
\item The PL map projecting $K_j$ onto $\{(r,0) \mid -\frac{1}{2} \leq r \leq 1\}$ canoncially.
\end{enumerate}   
\end{enumerate}
\end{enumerate}
\item
\label{prop:2.4}
Let $M_f$ be the set of all points except points in the disjoint union ${\sqcup}_{j^{\prime}} C_{j^{\prime}}$ where $C_{j^{\prime}}$ is as in {\rm (\ref{prop:2.3.3})} and $M_f \subset W_f$ is the complementary set of the set of all points which are so-called {\rm non-manifold} points. $M_f$ is also regarded as a smooth manifold and the restriction of $\bar{f}$ there is also a smooth immersion. Smooth manifolds or submanifolds of $W_f$ appearing in {\rm (\ref{prop:2.1})} and {\rm (\ref{prop:2.2})} are regarded as submanifolds of this if they are subsets of $M_f$.
\end{enumerate}
\end{Prop}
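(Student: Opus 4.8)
The plan is to read the statement as a package of local normal forms for the Reeb space that refine Fact~\ref{fact:1} and Fact~\ref{fact:2}, analyse $f$, $q_f$ and $\bar f$ on a tubular neighbourhood of each component of the singular set, and then patch the local pictures together. Since $f$ is simple, Fact~\ref{fact:2} gives that $q_f|_{S(f)}$ is injective; hence $q_f$ restricts to a PL homeomorphism of each connected component $\Sigma_j$ of $S(f)$ --- a closed connected $(n-1)$-manifold lying in a single $F_{i_j}(f)$ --- onto the corresponding component $C_j$ of $q_f(S(f))$, and $\bar f|_{C_j}=f|_{\Sigma_j}$ is a smooth immersion onto an immersed $(n-1)$-submanifold of the target $N$. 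Over the open set $W_f\smallsetminus q_f(S(f))$ everything is the submersion theorem: the corresponding part of $M$ consists of regular points of $f$, so collapsing fibre components of $f$ produces a smooth $n$-manifold on which $\bar f$ is a local diffeomorphism; this establishes (\ref{prop:2.1}) and the regular parts of (\ref{prop:2.2}) and (\ref{prop:2.4}). Everything that remains is the local picture along each loop $C_j$.

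Near $\Sigma_j$, by the normal form of Definition~\ref{def:0} --- which holds on a whole tubular neighbourhood of $\Sigma_j$, the coordinates being chosen to depend smoothly on the base point by the Morse lemma with parameters --- $f$ is, up to the monodromy of the normal bundle of $\Sigma_j$, the fibrewise product of $\mathrm{id}_{\Sigma_j}$ with the quadratic map $Q(y)=\sum_{k=1}^{a}y_k^{2}-\sum_{k=1}^{b}y_{a+k}^{2}$ on $\mathbb R^{m-n+1}$, with $a=m-n+1-i_j$, $b=i_j$; hence $W_f$ over $C_j$ is the bundle with fibre the Reeb space $R(Q)$ of $Q$, and $\bar f$ over $C_j$ is fibrewise the induced map $\bar Q$. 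Computing $R(Q)$: if $i_j=0$, the positive level sets are the connected spheres $S^{m-n}$ (here $m>n$), the zero level is a point and the rest is empty, so $R(Q)\cong[0,\infty)$ with $\bar Q$ the map $r\mapsto r^{2}$ up to reparametrisation --- this is case (\ref{prop:2.3.1}), fibre $[0,1]$; if $a\ge2$ and $b\ge2$ (equivalently $2\le i_j\le\frac{m-n+1}{2}$) every level set of $Q$ is connected, so $R(Q)\cong\mathbb R$ with $\bar Q=\mathrm{id}$ --- case (\ref{prop:2.3.2}), fibre $[-1,1]$; if $i_j=1$ and $m\ge n+2$ (so $a\ge2$, $b=1$) the single negative square splits the negative level sets into two sheets while the positive side stays connected, so $R(Q)$ is the tripod $K_j$ with two ``like'' legs and $\bar Q$ is the canonical projection of $K_j$ onto a diameter folding those two legs onto one half --- case (\ref{prop:2.3.3}). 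The only remaining index is $i_j=1$ with $m=n+1$, where $Q=y_1^{2}-y_2^{2}$ has an a priori $4$-valent local Reeb space; here simplicity is used: the $f$-fibre component through a point of $\Sigma_j$ is a compact connected $1$-complex with one $4$-valent vertex, hence a wedge of two circles, and in the model $\{y_1=y_2\}\cup\{y_1=-y_2\}$ its two circles join \emph{opposite} pairs of the four local half-edges, so tracking the two local sheets on each side of the fold around those circles shows that they reconnect, outside the chart, into a single circle; thus $R(Q)$ is after all a $1$-manifold near the centre and we are back in case (\ref{prop:2.3.2}). Hence exactly cases (\ref{prop:2.3.1})--(\ref{prop:2.3.3}) occur. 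For the structure group: in (\ref{prop:2.3.1}) and (\ref{prop:2.3.2}) the interval fibre carries the canonical PL coordinate ``value of the last target coordinate, normalised to vanish on $\Sigma_j$'', which fixes the point $0$ (the fold) and therefore both ends, so the structure group sits in the contractible group of PL self-homeomorphisms of an interval fixing an endpoint and the bundle is trivial; in (\ref{prop:2.3.3}) the two ``like'' legs are the two sides of the rank-one negative summand of the fibrewise quadratic form, so the only possible monodromy swaps those two legs and fixes the third, yielding structure group trivial or of order $2$ exactly as in (\ref{prop:2.3.3.1}). The stated local form of $\bar f$ over a product chart then follows by multiplying $\bar Q$ with $f|_{\Sigma_j}$: a PL homeomorphism of intervals in (\ref{prop:2.3.1})--(\ref{prop:2.3.2}), and the canonical folding $K_j\to[-\tfrac12,1]\times\{0\}$ in (\ref{prop:2.3.3.2}).

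To finish, let $M_f$ be $W_f$ with the (case-(\ref{prop:2.3.3})) triple loops deleted; by the trichotomy these are exactly the non-manifold points of $W_f$, since near $q_f(S(f))$ the three cases give a manifold with boundary, a manifold, and a non-manifold respectively. Put on $M_f$ the smooth structure pulled back from $N$ via $\bar f$: this is legitimate because the local models show $\bar f|_{M_f}$ is a PL \emph{local homeomorphism} onto (possibly half-)neighbourhoods in $N$ --- all the folding has been absorbed into $q_f$ --- and the structures induced on overlapping pieces automatically agree, being pullbacks of the one smooth structure of $N$. With it, $M_f$ is a smooth $n$-manifold whose boundary is the union of the case-(\ref{prop:2.3.1}) loops and $\bar f|_{M_f}$ is tautologically a smooth immersion, which is (\ref{prop:2.4}); and $W_f\smallsetminus q_f(S(f))$ is the open submanifold obtained by further removing the case-(\ref{prop:2.3.1}) and case-(\ref{prop:2.3.2}) loops, which pins down its structure in (\ref{prop:2.1}). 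For (\ref{prop:2.2}), if $V$ is a connected component of $W_f\smallsetminus q_f(S(f))$ then its frontier is a union of fold loops, and along each one the local model attaches it to $V$ either as a collar of a boundary loop (cases (\ref{prop:2.3.1}) and (\ref{prop:2.3.3}), and (\ref{prop:2.3.2}) when $V$ lies on one side only) or as an interior loop (case (\ref{prop:2.3.2}), $V$ on both sides); the model being the same all along a loop, $\overline V$ contains each such loop entirely, so $\overline V=V\cup(\text{finitely many whole components of }q_f(S(f)))$, which is compact (closed in the compact $W_f$) and a smooth manifold whose interior is $V$ together with the two-sided case-(\ref{prop:2.3.2}) loops and on which $\bar f$ restricts to a smooth immersion; all these manifolds are submanifolds of $M_f$ for the induced structure.

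The main obstacle is the completeness of the trichotomy, concretely ruling out the $4$-valent local Reeb picture that the quadratic form forces when $m=n+1$ and $i_j=1$; this is the one step that is not purely local, requiring the global reconnection argument above, and it is precisely here that the hypothesis ``simple'' --- each preimage component has at most one singular point, so the relevant singular fibre is genuinely a wedge of two circles realising the model crossing --- is used in an essential way. The remaining issues are routine: coherence of the several smooth structures on the manifold parts of $W_f$ is automatic once one notes they are all the pullback of the smooth structure of $N$ along the local homeomorphism $\bar f|_{M_f}$, so no gluing of genuinely different smoothings is needed.
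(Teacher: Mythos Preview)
The paper does not give its own proof of this proposition; immediately after the statement it refers the reader to section~2 of \cite{kobayashisaeki} and remarks that the result is not needed in full detail. So there is no in-paper argument to compare against, and your attempt goes further than what the paper itself offers.

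However, your argument has a genuine error in the case $i_j=1$, $m=n+1$ --- which is precisely the case $(m,n)=(3,2)$ central to the paper. You correctly note that the local Reeb picture of $Q(y_1,y_2)=y_1^{2}-y_2^{2}$ is \emph{a priori} $4$-valent, and that simplicity forces the singular fibre component through a point of $\Sigma_j$ to be a figure-eight. You then assert that ``its two circles join \emph{opposite} pairs of the four local half-edges'', i.e.\ that the global matching of the four half-edges is the one dictated by the two smooth branches $y_1=y_2$ and $y_1=-y_2$, and conclude case~(\ref{prop:2.3.2}). This step is unjustified: simplicity says only that the singular fibre component has a single $4$-valent vertex, not how its half-edges pair up outside the chart. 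Of the three possible matchings, the one you single out does give (\ref{prop:2.3.2}), but the other two give case~(\ref{prop:2.3.3}) --- one side of the fold has a connected fibre, the other side has two components. Your conclusion would force $W_f$ to be a $2$-manifold for every simple fold map from a $3$-manifold to a surface, contradicting Fact~\ref{fact:2}, Proposition~\ref{prop:6}, and the whole subject of the paper. A parallel (though less damaging) oversight occurs for $i_j=1$, $m\ge n+2$: the two local negative sheets can reconnect outside the chart, so (\ref{prop:2.3.2}) is also possible there, not only (\ref{prop:2.3.3}).

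What simplicity actually buys in the $m=n+1$ case is exactly the exclusion of the $4$-valent local model: each of the three matchings of the figure-eight collapses the four local half-edges to either two or three branches of $W_f$, so one lands in (\ref{prop:2.3.2}) or (\ref{prop:2.3.3}), and which one is global information not determined by the index. Your overall architecture --- local normal forms over $S(f)$ and the pullback smooth structure on $M_f$ --- is sound; replacing the faulty ``always (\ref{prop:2.3.2})'' claim by this dichotomy repairs the argument.
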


We do not need to understand this rigorous and lengthy proposition well in our paper. We can know the proof of this result by investigating section 2 of \cite{kobayashisaeki} for example. There, it is shown that for a stable fold map $f$ on a closed and smooth manifold whose dimension is greater than $2$, $W_f$ is a $2$-dimensional polyhedron having local topologies of finitely many certain types. More precisely, it is also shown for a {\it stable} map on such a manifold into a surface with no boundary. Note that stable maps exist densely in this case under the topologies of the sets of smooth maps between two smooth manifolds are as presented just after Fact \ref{fact:2}. In a word, they are so-called {\it Whitney $C^{\infty}$ topologies}.

The following is a fundamental principle for construction. In the present paper we concentrate on
the case $(m^{\prime},n)=(3,2)$. We use this and the construction of local smooth maps in (the sketch of) its proof implicitly or explicitly throughout the present paper.
\begin{Prop}
\label{prop:3}
For a simple fold map $f$ in {\rm Definition \ref{def:1}} with $m>n \geq 1$ and for any integer $m^{\prime}>n$, there exist an $m$-dimensional closed and smooth manifold $M^{\prime}$ and a simple fold map $f^{\prime}:M^{\prime} \rightarrow N$ enjoying the following properties.
\begin{enumerate}
\item
\label{prop:3.1}
 There exists a PL homeomorphism $\phi:W_f \rightarrow W_{f^{\prime}}$ satisfying the relation ${\bar{f}}^{\prime} \circ \phi=\bar{f}$ and compatible with the remaining properties.
\item
\label{prop:3.2}
$q_{f^{\prime}}(S(f^{\prime}))$ is the disjoint union of the set of all non-manifold points in $W_f$ and points in the boundary $\partial M_f$ of the $n$-dimensional compact manifold $M_f$. For points in the former case the indices of the singular points in the preimages are $1$ and they are $0$ in the latter case.

\item
\label{prop:3.3}
 $\phi$ maps the set ${\rm Int}\ M_f$, which is a smooth manifold and smooth submanifold of $M_f \supset {\rm Int}\ M_f$, onto the complementary set of $q_{f^{\prime}}(S(f^{\prime})) \subset W_{f^{\prime}}$ as a {\rm (}PL{\rm )} homeomorphism and a diffeomorphism. 
 \item
 \label{prop:3.4}
  $\phi$ maps the set $W_f-{\rm Int}\ M_f$, which is the disjoint union of the boundary $\partial M_f \subset M_f$ of the manifold $M_f$ and the set of all non-manifold points of $W_f$, onto $q_{f^{\prime}}(S(f^{\prime}))$ as a {\rm (}PL{\rm )} homeomorphism and a diffeomorphism.
 \item
 \label{prop:3.5}
  $\phi$ maps the closure of each connected component of $M_f$, which is also a smooth manifold, onto an $n$-dimensional smooth manifold embedded in $W_{f^{\prime}}$, as {\rm PL} homeomorphism and a diffeomorphism. 
\item
\label{prop:3.6}
 For $f^{\prime}$, preimages containing no singular points are disjoint unions of copies of the {\rm (}$m^{\prime}-n{\rm )}$-dimensional unit sphere $S^{m^{\prime}-n}$.
\end{enumerate}

\end{Prop}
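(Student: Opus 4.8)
The plan is to construct, from the given simple fold map $f\colon M\to N$ and the Reeb space $W_f$ (viewed as a polyhedron via Fact \ref{fact:1} and Proposition \ref{prop:2}), a new manifold $M'$ and map $f'$ by a \emph{local surgery on preimages} that does not disturb the quotient structure. First I would observe that away from the singular set the map $f$ is a smooth submersion, so $q_f$ restricted to $M-S(f)$ is a PL bundle projection onto $W_f-q_f(S(f))$ with fiber a closed $(m-n)$-manifold (a disjoint union of closed pieces, one per sheet); the stratification of $W_f$ in Proposition \ref{prop:2} records exactly how these fibers degenerate across $q_f(S(f))$. The strategy is to replace, fiberwise and compatibly with this stratification, each fiber-germ of $f$ by the model fiber-germ of a simple fold map whose source dimension is $m'$ instead of $m$, while keeping the target $N$ and the Reeb polyhedron fixed; the PL homeomorphism $\phi\colon W_f\to W_{f'}$ is then the identity on the nose (or the obvious identification), which automatically gives ${\bar f}'\circ\phi=\bar f$, i.e.\ property (\ref{prop:3.1}).

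The key local models are the following. Over $M_f$ — the manifold part of $W_f$ minus its boundary — the new fiber is simply a disjoint union of copies of $S^{m'-n}$, replacing the old $S^{m-n}$'s; over a collar of $\partial M_f$ we glue in the standard index-$0$ fold germ $(x_1,\dots,x_{m'})\mapsto(x_1,\dots,x_{n-1},\sum_{j=1}^{m'-n+1}x_{n-1+j}^2)$, whose Reeb germ is a half-disk attached along $\partial M_f$, accounting for the index-$0$ singular points in (\ref{prop:3.2}) and the boundary behavior in Proposition \ref{prop:2}(\ref{prop:2.3.1})--(\ref{prop:2.3.2}); over the non-manifold locus of $W_f$ (the components $C_{j'}$ of type (\ref{prop:2.3.3}) in Proposition \ref{prop:2}) we glue in the index-$1$ fold germ $(x_1,\dots,x_{m'})\mapsto(x_1,\dots,x_{n-1},\sum_{j=1}^{m'-n}x_{n-1+j}^2-x_{m'}^2)$, whose Reeb germ over a small interval is exactly the tripod $K_j$ — this realizes the index-$1$ assignment in (\ref{prop:3.2}) and the local picture in Proposition \ref{prop:2}(\ref{prop:2.3.3}). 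In each case the source of the germ is a PL (in fact smooth) bundle over the corresponding stratum of $W_f$; I would check that the structure groups occurring (Proposition \ref{prop:2}(\ref{prop:2.3.3.1}): trivial or $\mathbb Z/2$ acting by swapping two legs of $K_j$) lift to structure-group actions on the higher-dimensional model fibers in the obvious way, so the bundles do glue. Properties (\ref{prop:3.3})--(\ref{prop:3.5}) then follow because $\phi$ is, by construction, the identity on the underlying polyhedron and each indicated subset is a union of strata on which the construction was the identity or a controlled collar/tripod replacement; property (\ref{prop:3.6}) is immediate from the choice of the model regular fiber.

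The main obstacle is the \emph{gluing step}: one must verify that the locally defined higher-dimensional fold germs, built separately over the top stratum, over a collar of $\partial M_f$, and over each non-manifold component, agree on overlaps well enough to assemble into a single smooth map $f'$ on a single closed smooth manifold $M'$. This is where the transition functions of the original bundle $q_f\colon M-S(f)\to W_f-q_f(S(f))$ enter: I would trivialize over the interiors of the strata, write down the fold germ in each chart explicitly, and then match the parametrized families of germs along the stratum boundaries using the normal-form coordinates supplied by Proposition \ref{prop:2} (the "product map of PL homeomorphisms" descriptions there are precisely what pins down the matching). A secondary point to be careful about is orientability/smoothness of $M'$ — the index-$1$ model involves a $-x_{m'}^2$ term and a possible $\mathbb Z/2$ monodromy, so one checks the resulting total space is a genuine smooth closed manifold (not merely PL), which follows because the $\mathbb Z/2$-action is by an orientation-compatible isometry of the model fiber. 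Once these compatibilities are in place, $f'$ is tautologically a simple fold map (each preimage component has at most one singular point, by the same combinatorics as for $f$, since the Reeb polyhedron and its stratification are unchanged), and all listed properties hold. This construction, in the case $(m',n)=(3,2)$, is exactly the one we will invoke repeatedly in Section~3.
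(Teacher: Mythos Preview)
Your proposal is correct and follows essentially the same approach as the paper's sketch: build local model maps over each stratum of $W_f$ (a definite Morse function on a disk over collars of $\partial M_f$, an index-$1$ Morse function on a sphere-minus-three-disks over the non-manifold locus, and a trivial $S^{m'-n}$-bundle over the regular part), handle the possible $\mathbb{Z}/2$ monodromy by twisting the product, glue, and compose with $\bar f$. You are somewhat more explicit than the paper about the gluing compatibilities and the lifting of structure groups, but the underlying construction is the same.
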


It is regarded as a fundamental property of (simple) fold maps. \cite{kitazawa} also presents a proof of Proposition \ref{prop:3} for example. We also give a sketch of a proof.
\begin{proof}[A sketch of a proof of Proposition \ref{prop:3}.]

We construct a local map onto a small regular neighborhood $N(C)$ of $C \subset q_f(S(f))$ such that $N(C)$ is regarded as the total space of a trivial PL bundle over $C$ whose fiber is a closed interval $[0,1]$ and that gives the identification of $C \subset q_f(S(f))$ with $C \times \{0\} \subset N(C)$ via the map mapping $x$ to $(x,0)$ in Proposition \ref{prop:2}.

We construct a Morse function $r_{c}$ on a copy of the unit disk $D^{m-1}$ defined by $r(x_1,\cdots,x_{m-1}):={\Sigma}_{j=1}^{m-1} {x_j}^2+c$ where $c$ is a suitable real number. We construct the product map of $r_{c}$ and the identity map on $C$. By composing a suitable PL homeomorphism, we have a local map onto $N(C)$.

We construct a local map onto a small regular neighborhood $N(C)$ of $C \subset q_f(S(f))$ such that $N(C)$ is regarded as the total space of a PL bundle over $C$ whose fiber is $K:=\{(r \cos t, r \sin t) \mid 0 \leq r \leq 1, t=0,\frac{2}{3}\pi, \frac{4}{3}\pi.\} \subset {\mathbb{R}}^2$ and that gives the identification of $C$ with $C \times \{(0,0)\} \subset N(C)$ via the map mapping $x$ to $(x,(0,0))$ in Proposition \ref{prop:2}.

We have a Morse function ${r_{c}}^{\prime}$ where $c$ is a suitable real number. 

\begin{itemize}
\item

 The manifold of the domain is a manifold obtained by removing the interiors of three smoothly and disjointly embedded copies of the ($m-1$)-dimensional unit disk $D^{m-1}$ from the ($m-1$)-dimensional unit sphere $S^{m-1}$.
\item

 ${r_{c}}^{\prime}$ has exactly one singular point and it is in the interior.
\item 
Around this singular point, there exists a suitable coordinate and we have ${r_{c}}^{\prime}(x_1,\cdots,x_{m-1})={\Sigma}_{j=1}^{m-2} {x_j}^2-{x_{m-1}}^2+c$ for the coordinate.
\item 

The preimage of the minimal value is one or the disjoint union of two of the connected components of the boundary of the manifold.
\item 

The preimage of the maximal value is the disjoint union of the remaining connected components of the boundary of the manifold.
\end{itemize}
We show the construction only in the case where the total space of the PL bundle over $C$ whose fiber is $K$ is trivial. 
We construct the product map of $r_{c}$ and the identity map on $C$. By composing a suitable PL homeomorphism, we have a local map onto $N(C)$.
We can also construct a map in the case where the bundle is non-trivial as presented in Proposition \ref{prop:2} (\ref{prop:2.3.3.1}) by twisting the trivial smooth family of the Morse functions.
Consider the disjoint union of all $N(C)$ here and its interior. Over the complementary set of this in $W_f$, we can construct the projection of a trivial smooth bundle whose fiber is diffeomorphic to $S^{m-2}$. By constructing this, we have all desired local maps and have a surjective map onto $W_f$ by gluing them in a canonical way.

Last we compose the resulting surjective map onto $W_f$ with $\bar{f}$. This is a desired simple fold map.
 
\end{proof}
\begin{Def}[Definition \ref{def:4}]
\label{def:5}
In Proposition \ref{prop:2} or \ref{prop:3}, the bundles whose fibers are $K$ are trivial, then $f$ or $f^{\prime}$ is said to be {\it normal} and $W_f$ or $W_{f^{\prime}}$ is also said to be {\it normal}. 
Hereafter, we say the map $f$ is {\it normal} if the restriction of the map to $f^{-1}(N(C))$ is the product map of a Morse function and the identity map on $C$ for suitable coordinates respecting the structures of the products in the (sketch of the) proof of Proposition \ref{prop:3}.
\end{Def}

\begin{Rem}
	\label{rem:0}
	In Definition \ref{def:5}, from some well-known arguments on singularity theory and the groups of homeomorphisms and diffeomorphisms of conpact surfaces, the new definition of a simple fold map is satisfied only from the definition in Definition \ref{def:4} for example in the case where simple fold maps are from $3$-dimensional closed and orientable manifolds into orientable surfaces with no boundaries. Respecting this, after Definition \ref{def:10}, in the third section, the class of a {\it most normal} simple fold maps is defined as a subclass of this class. However, in the case of such $3$-dimensional manifolds and maps, normal simple fold maps are always most normal. This is also fundamental facts in \cite{saeki2}, followed by \cite{kitazawasaeki} as a closely related study, for example.
\end{Rem}

For $m>n=2$, the Reeb space is a so-called {\it simple polyhedron} with no {\it vertices}.
 \cite{ikeda} is one of pioneering works on topological theory of such polyhedra.

  \cite{turaev} is also on a related study and presents a study on so-called {\it shadows} of $3$-dimensional manifolds. \cite{costantinothurston, ishikawakoda} are related to both {\it shadows} and stable maps on $3$-dimensional closed manifolds into surfaces with no boundaries. 

Hereafter, we essentially concentrate on normal $2$-dimensional polyhedra, which are the Reeb spaces of some simple fold maps into some surfaces with no boundaries. However, in considerable situations, we encounter $2$-dimensional such polyhedra which are not normal. Section 7 of \cite{kobayashisaeki} presents good examples. More precisely, FIGURE 7 (c) there is one of such examples.
 In studies presented here, we encounter such polyhedra in general. We can also encounter $2$-dimensional polyhedra which are locally the Reeb spaces of some simple fold maps and which are not globally in these studies of shadows for example.
\subsection{Special generic maps.}

This subsection is a kind of appendices. 

\begin{Def}
\label{def:6}
A fold map $f$ is said to be {\it special generic} if $i(p)=0$ for any $p \in S(f)$.
\end{Def}

\begin{Prop}
\label{prop:4}
Special generic maps are simple.
\end{Prop}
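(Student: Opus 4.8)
\textbf{Proof proposal for Proposition \ref{prop:4}.}

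The plan is to verify the defining condition of a simple fold map from Definition \ref{def:1} directly: that each connected component of each preimage contains at most one singular point. Since $f$ is special generic, every singular point $p \in S(f)$ has index $i(p)=0$, so by Definition \ref{def:0} the local normal form at $p$ is $(x_1,\dots,x_m) \mapsto (x_1,\dots,x_{n-1},\Sigma_{j=1}^{m-n+1} x_{n-1+j}^2)$. Thus locally near $p$ the map looks like the product of the identity on ${\mathbb{R}}^{n-1}$ with the squared-norm function ${\mathbb{R}}^{m-n+1}\to \mathbb{R}$ whose only critical point is the origin (a local minimum). In particular $F_0(f)=S(f)$ is a closed submanifold and $f{\mid}_{S(f)}$ is an immersion, as recorded in the Proposition after Definition \ref{def:0}.

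First I would fix a connected component $C$ of a preimage $f^{-1}(y)$ for some $y \in N$ and suppose, for contradiction, that $C$ contains two distinct singular points $p$ and $p'$. The key local observation is that each singular point of a special generic map is an isolated local minimum of $f$ along the fibre direction: in the normal form above, a punctured neighbourhood of $p$ in the fibre $f^{-1}(f(p))$ is $\{ \Sigma x_{n-1+j}^2 = 0\}$ minus the origin near $p$, which is \emph{empty}, because the only solution of $\Sigma x_{n-1+j}^2=0$ is $x_{n-1+j}=0$ for all $j$. Hence $p$ is an isolated point of the fibre $f^{-1}(y)$, i.e.\ $\{p\}$ is itself a connected component of $f^{-1}(y)$. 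The same argument applies to $p'$, so $\{p\}$ and $\{p'\}$ are each connected components of $f^{-1}(y)$; since $p \neq p'$ they are distinct components, contradicting the assumption that both lie in the single component $C$.

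Assembling this: every singular point of a special generic map is an isolated point of its own fibre, so each fibre component contains at most one singular point, which is exactly the defining property of a simple fold map in Definition \ref{def:1}. The one point that needs a little care — and is the only place where a genuine (if elementary) argument is required rather than a formal manipulation — is the claim that $p$ is isolated \emph{in the whole fibre}, not merely in a small neighbourhood: the normal form gives isolation locally, and then one notes that a connected subset of $f^{-1}(y)$ containing $p$ but not equal to $\{p\}$ would have to accumulate at $p$ or reach a neighbourhood of $p$, both excluded by the local picture. I expect this topological bookkeeping to be the main (mild) obstacle; everything else is immediate from the index-zero normal form.
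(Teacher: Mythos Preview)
Your argument is correct. The paper states Proposition~\ref{prop:4} without proof, treating it as an immediate fact, so there is no proof to compare against; your verification via the index-zero normal form---showing that each singular point is an isolated point of its own fibre and hence constitutes its own connected component---is precisely the standard justification. The ``topological bookkeeping'' you flag as the only mild obstacle is in fact immediate: the normal form furnishes an open neighbourhood $U$ of $p$ in $M$ with $U\cap f^{-1}(y)=\{p\}$, so $\{p\}$ is open (and trivially closed) in the subspace $f^{-1}(y)$, hence a connected component outright; no accumulation argument is needed.
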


For example, canonical projections of unit spheres, which are also round, are special generic.

We do not concentrate on special generic maps essentially. We comment on Propositions \ref{prop:2} and \ref{prop:3} again by presenting Fact \ref{fact:3}.

\begin{Fact}
\label{fact:3}
In {\rm Propositions \ref{prop:2}} and {\rm \ref{prop:3}}, if $M_f=W_f$, then we obtain $f^{\prime}$ as a special generic map{\rm :} for a smooth immersion of an $n$-dimensional compact and smooth manifold into $N$, we can construct a special generic map $f^{\prime}:M^{\prime} \rightarrow N$ into $N$ on a suitable $m^{\prime}$-dimensional closed and smooth manifold $M^{\prime}$ such that $\bar{f^{\prime}}$ and the original immersion agree.
\end{Fact}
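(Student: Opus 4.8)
The plan is to build the special generic map $f^{\prime}$ directly from the data $M_f = W_f$ together with the smooth immersion $\bar{f}$ of the $n$-dimensional compact manifold $W_f$ into $N$. The key observation is that when $M_f = W_f$, the Reeb space carries no non-manifold points, so by Proposition \ref{prop:2} every connected component $C_j$ of $q_f(S(f))$ falls under case (\ref{prop:2.3.1}) or (\ref{prop:2.3.2}); in particular $W_f$ is an $n$-dimensional compact manifold whose boundary is exactly $q_f(S(f))$ (the components of type (\ref{prop:2.3.2}), being two-sided collars, do not arise on the boundary of a manifold-with-boundary, so in fact only type (\ref{prop:2.3.1}) occurs). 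Thus the situation is precisely the one handled in the sketch of Proposition \ref{prop:3}: over the interior of $W_f$ we take a trivial $S^{m^{\prime}-n}$-bundle, and over a collar of each boundary component $C_j$ we glue in the product of $C_j$ with the Morse function $r_c(x_1,\dots,x_{m^{\prime}-n}) = \sum_{j=1}^{m^{\prime}-n} x_j^2 + c$ on $D^{m^{\prime}-n}$.

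First I would verify that the local model over a boundary collar is special generic: the only singular points of $r_c$ are the (absolute) minimum, where $r_c$ has index $0$, so the associated fold singularities of the product map all have index $i(p) = 0$; this is exactly Definition \ref{def:6}. Second, I would check that these local pieces glue: over the interior we have the trivial sphere bundle $\mathrm{Int}\,W_f \times S^{m^{\prime}-n}$, and near each $C_j$ the preimage of a point just inside the collar is a single $S^{m^{\prime}-n}$ (the boundary of $D^{m^{\prime}-n}$ being $S^{m^{\prime}-n-1}$ — more precisely, in the product model the regular fibre is $\partial D^{m^{\prime}-n} = S^{m^{\prime}-n-1}$ thickened appropriately, consistent with the sphere-bundle picture of Proposition \ref{prop:3}(\ref{prop:3.6})). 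Since all bundles in sight are trivial by construction, the gluing is canonical, and one obtains a closed smooth manifold $M^{\prime}$ of dimension $m^{\prime}$ with a well-defined smooth map to $W_f$ whose Reeb space is $W_f$ itself. Third, I would compose this map with the given immersion $\bar{f} \colon W_f \to N$; since $\bar{f}$ is a smooth immersion of an $n$-manifold, composing preserves the fold structure and the indices, so the resulting map $f^{\prime} \colon M^{\prime} \to N$ is a special generic map with $W_{f^{\prime}} = W_f$ and $\bar{f^{\prime}} = \bar{f}$. The final clause of the statement — that from \emph{any} immersion of an $n$-dimensional compact manifold into $N$ one can realize it as $\bar{f^{\prime}}$ — follows from the same construction, taking $W_f$ to be the given manifold and forgetting the original $f$.

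The main obstacle I expect is purely bookkeeping rather than conceptual: one must make the collar identifications near the boundary components $C_j$ compatible with the global trivial sphere bundle over the interior, so that the various charts patch to give a genuine smooth structure on $M^{\prime}$ and a genuinely smooth (fold) map. This is routine because every relevant bundle is trivial — the content of the hypothesis $M_f = W_f$ is precisely that no type-(\ref{prop:2.3.3}) components with nontrivial monodromy occur — but it requires care in choosing collar coordinates and the constants $c$ so that the Morse pieces and the product sphere bundle agree on overlaps. Once the smoothing is arranged, the special generic property and the identities $W_{f^{\prime}} = W_f$, $\bar{f^{\prime}} = \bar{f}$ are immediate from the local forms.
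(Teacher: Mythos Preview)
Your argument is essentially the one the paper intends, although the paper does not actually prove Fact~\ref{fact:3}: it records it as background with a reference to \cite{saeki1}. The content is immediate from the sketch of Proposition~\ref{prop:3}, exactly as you say---when $M_f=W_f$ there are no non-manifold points, so by Proposition~\ref{prop:3}(\ref{prop:3.2}) the singular set of the reconstructed $f'$ lies over $\partial M_f$ alone, and the only local models used there are the index-$0$ disk Morse functions $r_c$, whence $f'$ is special generic.

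Two small corrections. First, the disk should be $D^{m'-n+1}$, not $D^{m'-n}$: the singular component $C$ has dimension $n-1$, so $C\times D^{m'-n+1}$ has dimension $m'$, and its regular fibre $\partial D^{m'-n+1}=S^{m'-n}$ then matches Proposition~\ref{prop:3}(\ref{prop:3.6}) on the nose, with no need for the fudge ``thickened appropriately''. Second, your parenthetical claim that type-(\ref{prop:2.3.2}) components cannot occur is not right in general: for $m\geq 4$ an indefinite fold can yield a two-sided collar in the Reeb space, so such components may appear in $q_f(S(f))$. They lie in the \emph{interior} of $M_f=W_f$, however, so Proposition~\ref{prop:3}(\ref{prop:3.2}) tells you they are absorbed into the trivial sphere bundle over the interior and contribute no singularities to $f'$. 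Neither slip affects your overall reasoning.
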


For this subsection, see also \cite{saeki1} for example.
 
 \subsection{Graph manifolds and graphs we can define for graph manifolds.}
 
 \begin{Def}
 	\label{def:7}
 	A $3$-dimensional closed and orientable manifold $M$ is said to be a {\it graph} manifold if there exist finitely many disjoint tori smoothly embedded in $M$ and by removing the interiors of suitable small regular neighborhoods of them, we have finitely many (total spaces of) smooth bundles over compact and connected surfaces whose fibers are circles.
 \end{Def}
 A {\it pair of pants} is a $2$-dimensional manifold obtained by removing the interiors of three disjointly and smoothly embedded copies of the $2$-dimensional unit disk $D^2$ from a copy of the $2$-dimensional unit sphere $S^2$.
 \begin{Prop}[E. g. \cite{kitazawasaeki}]
 	\label{prop:5}
 	In {\rm Definition \ref{def:7}}, we can choose the tori in such a way that the resulting base spaces of the bundles are copies of the $2$-dimensional unit disk $D^2$ or pairs of pants and that the bundles are trivial smooth bundles. Furthermore, for each of the small regular neighborhoods of these tori, we can also do so that for the distinct connected components of the boundary of the regular neighborhood of each torus are in the boundaries of the distinct {\rm (}total spaces of the{\rm }) bundles over a copy of $D^2$ or a pair of pants.
 \end{Prop}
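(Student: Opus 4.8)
The plan is to start from a decomposition as in Definition~\ref{def:7} --- disjoint tori $\tau_1,\dots,\tau_k$ with disjoint regular neighborhoods $N(\tau_j)\cong T^2\times[-1,1]$ whose complement $P:=M\setminus\bigsqcup_j{\rm Int}\,N(\tau_j)$ is a disjoint union of total spaces $P_i$ of $S^1$-bundles $\pi_i:P_i\rightarrow\Sigma_i$ over compact connected surfaces $\Sigma_i$ --- and to enlarge the family of tori and readjust their regular neighborhoods in a few stages. The one structural fact I would use throughout is that $M$ is orientable, so every $P_i$ is orientable; hence the first Stiefel--Whitney class of every $S^1$-bundle over an orientable surface appearing below vanishes, so that bundle is orientable (principal), and over a surface with nonempty boundary it is therefore trivial, its Euler class lying in the vanishing group $H^2(\Sigma_i;\mathbb{Z})$. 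In particular any vertical torus $\pi_i^{-1}(c)$ over an embedded circle $c\subset\Sigma_i$ is a genuine torus, not a Klein bottle. This is essentially the statement made, for instance, in \cite{kitazawasaeki}.

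First I would make every base surface have nonempty boundary: if some $\Sigma_i$ is closed, I adjoin to the decomposition a vertical torus $\pi_i^{-1}(c)$, together with a thin regular neighborhood, where $c\subset\Sigma_i$ is non-separating if the genus is positive and an inessential circle if $\Sigma_i\cong S^2$. After finitely many such moves every base has boundary, hence every piece is the total space of a trivial $S^1$-bundle. Next, in each base $\Sigma_i$ I choose a maximal family $\mathcal{C}_i$ of disjoint, pairwise non-isotopic, essential, non-boundary-parallel simple closed curves and adjoin the vertical tori $\pi_i^{-1}(c)$, $c\in\mathcal{C}_i$, again with thin regular neighborhoods. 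By maximality each component of $\Sigma_i$ cut along $\mathcal{C}_i$ contains no essential non-peripheral simple closed curve, so it is a disk, a pair of pants, or an annulus; correspondingly each piece is now a bundle over a disk, over a pair of pants, or a copy of $T^2\times[0,1]$.

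It then remains to get rid of the $T^2\times[0,1]$ pieces and to arrange that the two boundary components of each torus neighborhood lie in \emph{distinct} pieces. A $T^2\times[0,1]$ piece is glued along its two boundary tori to one or two neighboring pieces; I delete it together with the regular neighborhood of one of these two tori and reglue the neighbors directly by the resulting composite diffeomorphism, which leaves $M$ unchanged and strictly decreases the number of such pieces, so after finitely many steps none remain. To remove the remaining ``self-gluings'' --- a torus both of whose sides lie in one piece, which must then be a bundle over a pair of pants two of whose three boundary tori are identified --- I replace that identification by the insertion of a new bundle over a pair of pants, two of whose boundary tori receive the two identified tori, capped off fiberwise along its third boundary torus by a solid torus $S^1\times D^2$. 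Since capping one boundary circle of a pair of pants with a disk yields an annulus, this inserted region is fiberwise a copy of $T^2\times[0,1]$, so it does not change $M$; it destroys the self-gluing and introduces only bundles over disks and over pairs of pants. Iterating removes all self-gluings, and the resulting tori with their regular neighborhoods then have all the asserted properties.

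The hard part is the last stage: one must check that deleting the $T^2\times[0,1]$ pieces and surgering away the self-gluings can be carried out so that all three requirements --- that the bases be disks or pairs of pants, that the bundles be trivial products, and that each torus neighborhood meet two distinct pieces --- hold simultaneously, and that the procedure terminates. Here the subtle point is that deleting a $T^2\times[0,1]$ piece which joins a component to itself creates a self-gluing, so the self-gluings must be cleared only after all $T^2\times[0,1]$ pieces are gone; and the pair-of-pants-plus-solid-torus insertion above is exactly the device that lets the self-gluing step finish without reintroducing an annulus piece or a new self-gluing.
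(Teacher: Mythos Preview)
The paper does not prove this proposition; it records it as a known fact with a citation to \cite{kitazawasaeki}, so there is no in-paper argument to compare yours against. Your direct, hands-on construction is the natural route and is largely sound, but two points are not covered.

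First, Definition~\ref{def:7} allows the base surfaces $\Sigma_i$ to be non-orientable. When $\Sigma_i$ is non-orientable and $P_i$ is orientable, one has $w_1(\text{bundle})=w_1(\Sigma_i)\neq 0$, so the bundle is \emph{not} principal: the preimage of an orientation-reversing loop is a Klein bottle, not a torus, and the bundle over such a $\Sigma_i$ with boundary need not be trivial. Your assertion that ``any vertical torus $\pi_i^{-1}(c)$ \dots\ is a genuine torus, not a Klein bottle'' is therefore unjustified in general. The standard repair is a refibering step (the orientable $S^1$-bundle over a M\"obius band is also Seifert-fibred over $D^2$ with two exceptional fibres of multiplicity~$2$, whose neighbourhoods are solid tori), after which every piece sits over an orientable base and your argument applies. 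Second, your annulus-removal move ``delete the $T^2\times[0,1]$ piece and one adjacent torus, then reglue the neighbours'' fails when the annulus piece has both boundary tori on the \emph{same} torus neighbourhood --- i.e.\ when the current decomposition consists of a single torus and a single $T^2\times[0,1]$ piece, so that $M$ is a $T^2$-bundle over $S^1$ --- since there are then no neighbours to reglue. Your pants-plus-disk insertion resolves this too (apply it to the annulus piece itself, yielding a pants piece with one self-gluing plus a disk piece, then run your self-gluing step), but as written you invoke that device only for self-gluings of pants pieces, after having declared all annulus pieces already removed.
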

 The following graph is used in \cite{kitazawasaeki} where we do not define the notion rigorously or explicitly.
 \begin{Def}
 	\label{def:8}
 	A {\it representation graph} for a graph manifold $M$ is a graph enjoying the following two properties.
 	\begin{enumerate}
 		\item The vertex set consists of all bundles obtained after removing the interiors of the regular neighborhoods of the tori in {\rm Proposition \ref{prop:5}}.
 		\item The edge set consists of all tori here. The two vertices in an edge $e$ or a torus are the bundles or vertices in which the connected components of the boundary of the regular neighborhood of the torus $e$ are.   
 	\end{enumerate}
 \end{Def} 
 Such a graph is not unique for a graph manifold. This graph has no loops. This graph may be a multigraph. This graph is a finite graph. This graph is connected if and only if $M$ is connected. The degree of a vertex is $1$ if the base space is the $2$-dimensional unit disk $D^2$ and $3$ if the base space is a pair of pants.
 In \cite{neumann}, similar graphs are introduced and studied. However, our graphs are different from the graphs. However, both kinds of graphs inherit similar important topological properties of the original manifolds in considerable situations. We will also introduce these graphs in \cite{neumann} shortly in the end of our paper.
\subsection{Simple fold maps on $3$-dimensional closed and orientable manifolds and graph manifolds into ${\mathbb{R}}^2$.}
It is well-known that $3$-dimensional closed manifolds admit (stable) fold maps into ${\mathbb{R}}^2$. See \cite{levine} and also \cite{thom, whitney}.
Moreover, the following theorem or Theorem \ref{thm:1} has been shown for (simple and stable) fold maps on graph manifolds into ${\mathbb{R}}^2$.

As presented before Main Theorem \ref{mthm:1} in the first section, for a round fold map into ${\mathbb{R}}^n$, we have a PL bundle over the ($n-1$)-dimensional unit sphere $S^{n-1}$
by considering the projection defined as the map 
from ${({\phi}_{{\mathbb{R}}^n} \circ \bar{f})}^{-1}(\{x \in {\mathbb{R}}^n \mid ||x|| \geq \frac{1}{2} \})$ onto $S^{n-1}$ mapping $x$ to $\frac{1}{||{({\phi}_{{\mathbb{R}}^n} \circ \bar{f})}(x)||} {({\phi}_{{\mathbb{R}}^n} \circ \bar{f})}(x)$ in the situation of Definition \ref{def:2}. Its fiber is a $1$-dimensional polyhedron. 
\begin{Def}[Definition \ref{def:4}]
	\label{def:9}
	A {\it topologically quasi-trivial} round fold map means a round fold map such that a PL bundle obtained in this way is trivial for a suitable diffeomorphism ${\phi}_{{\mathbb{R}}^n}:{\mathbb{R}}^n \rightarrow {\mathbb{R}}^n$ in the situation of Definition \ref{def:2}.
\end{Def}
\begin{Thm}
\label{thm:1}
\begin{enumerate}
\item \label{thm:1.1}
{\rm (}\cite{saeki2}.{\rm )} A $3$-dimensional closed and orientable manifold admits a normal simple fold map into ${\mathbb{R}}^2$ if and only if it is a graph manifold.
\item \label{thm:1.2}
{\rm (}\cite{saeki2}.{\rm )} A $3$-dimensional closed and orientable manifold admits a normal strongly simple fold map into ${\mathbb{R}}^2$ if and only if it is a graph manifold.
\item \label{thm:1.3}
{\rm (}\cite{kitazawasaeki}{\rm )} A $3$-dimensional closed and orientable manifold admits a topologically quasi-trivial round fold map into ${\mathbb{R}}^2$ if and only if it is a graph manifold.
\end{enumerate}
\end{Thm}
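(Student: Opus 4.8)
The plan is to prove each of the three biconditionals by treating the two implications separately, and to organise the \emph{necessity} (map $\Rightarrow$ graph manifold) directions uniformly. A topologically quasi-trivial round fold map is strongly simple by Definition~\ref{def:2}, and a strongly simple fold map is simple (as recorded in the Example following Fact~\ref{fact:1}), so it suffices to prove necessity once, for an arbitrary simple fold map $f\colon M\to{\mathbb R}^2$ on a $3$-dimensional closed orientable $M$; this single argument then yields the ``only if'' parts of (\ref{thm:1.1}), (\ref{thm:1.2}) and (\ref{thm:1.3}) at once, with normality (Remark~\ref{rem:0}) playing no role in the conclusion. The two genuinely different tasks are therefore this one structural argument and the three increasingly refined \emph{sufficiency} constructions.

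For the necessity direction I would start from the Reeb space $W_f$ and analyse it via Propositions~\ref{prop:2} and~\ref{prop:3}. Here $m=3$ and $n=2$, so each fold point has index $0$ or $1$, every regular fibre is a disjoint union of circles $S^{m-n}=S^1$, and $W_f$ is a $2$-dimensional normal simple polyhedron. Let $\Gamma\subset W_f$ be the $1$-complex consisting of the boundary-type stratum (images of index-$0$ folds, where the circle fibre of $q_f$ collapses to a point) together with the non-manifold stratum (images of index-$1$ folds, where three sheets meet). Taking a small regular neighbourhood $N(\Gamma)$ and pulling it back by $q_f$, the key observation is that the frontier of $q_f^{-1}(N(\Gamma))$ in $M$ is a disjoint union of tori: over an arc transverse to a $1$-stratum the circle fibre sweeps out an annulus, and over a loop of that stratum these annuli close up, with orientability of $M$ forbidding Klein bottles. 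Cutting $M$ along these tori, each complementary piece is $q_f^{-1}$ of a $2$-dimensional stratum of $W_f$ with its collars removed, hence a circle bundle over a compact connected surface with boundary; this is exactly the decomposition required by Definition~\ref{def:7}, exhibiting $M$ as a graph manifold.

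For the sufficiency direction I would invoke Proposition~\ref{prop:5} to present the graph manifold $M$ as trivial circle bundles over copies of $D^2$ and over pairs of pants, glued along tori, and then build $f$ piece by piece using the local Morse models of Proposition~\ref{prop:3}. Over $D^2\times S^1$ I place a round/special-generic-type map whose Reeb space is a $2$-disk with a collar; over a pair of pants $P$ the bundle $P\times S^1$ receives a map with a single index-$1$ fold circle generating the three-sheeted non-manifold locus; and each gluing torus is realised by an annular connecting map. Matching the Reeb spaces along their border circles produces a global \emph{normal simple} fold map, proving the ``if'' part of (\ref{thm:1.1}). To obtain (\ref{thm:1.2}) I lay the pieces in the plane so that the images of the fold circles are pairwise disjoint and embedded, making $f|_{S(f)}$ an embedding. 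To obtain (\ref{thm:1.3}) I further organise the construction radially, nesting the pieces as concentric annular layers so that $f(S(f))$ becomes a union of concentric circles and the $S^1$-bundle of Definition~\ref{def:9} is trivial, giving a topologically quasi-trivial round fold map.

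I expect the main obstacle to lie in the sufficiency constructions, and specifically in the compatibility of the gluings: one must choose the local Morse data on adjacent pieces so that the fibrewise circles and the fold-curve images line up across each connecting torus, keeping the assembled map smooth and normal (the $K$-bundles of Proposition~\ref{prop:2}\,(\ref{prop:2.3.3}) untwisted) rather than merely continuous. This is sharpest for (\ref{thm:1.3}), where forcing the entire singular value set into a concentric configuration with trivial radial monodromy constrains the order in which the pieces may be attached; carrying this out is precisely the content of the constructions of \cite{saeki2, kitazawasaeki}, which I would follow.
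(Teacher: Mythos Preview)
Your proposal is correct and aligns with the paper's approach: the paper itself only sketches the sufficiency direction of (\ref{thm:1.3}), deferring (\ref{thm:1.1}) and (\ref{thm:1.2}) and the necessity direction entirely to \cite{saeki2,kitazawasaeki}, and its sketch proceeds exactly as you describe---local product maps of most standard Morse functions with the identity on $S^1$ over each vertex of a representation graph, glued along the edges. The one ingredient the paper makes explicit that your outline leaves to the cited references is that the representation graph must first be taken of (a mild extension of) \emph{plumbing type} via \cite{neumann}, and that on each edge one inserts either a trivial circle-bundle projection or an \emph{S-map supporting an annulus} (Definition~\ref{def:13}, Theorem~\ref{thm:2}); this is precisely the compatibility mechanism you flag as the main obstacle.
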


Note that conditions such as ones given by words "normal" and "topologically quasi-trivial" are not presented explicitly there. We can add these conditions by the construction of the maps. Related to Remark \ref{rem:0}, we can see that a topologically quasi-trivial round fold map from a $3$-dimensional closed and orientable manifold into ${\mathbb{R}}^2$ is also normal (in the sense of Definition \ref{def:5}). We show a sketch of a proof of Theorem \ref{thm:1} (\ref{thm:1.3}) in the next section.

\section{Proofs of Main Theorems.}
We prove Main Theorems. 

\begin{Def}
\label{def:10}
We call the following two Morse functions {\it most standard} Morse functions.
\begin{enumerate}
\item A Morse function on a copy of a unit disk satisfying the following two.
\begin{enumerate}
\item The boundary is the preimage of the minimal (maximal) value.
\item There exists exactly one singular point, at which the function has the maximal (resp. minimal) value.
\end{enumerate}
\item A Morse function on a pair of pants satisfying the following three.
\begin{enumerate}
\item The disjoint union of two of the connected components of the boundary is the preimage of the minimal (maximal) value.
\item The remaining connected component of the boundary is the preimage of the maximal (resp. minimal) value.
\item There exists exactly one singular point and it is in the interior of the pair of pants. 
\end{enumerate}
\end{enumerate}
\end{Def}
These functions are regarded as local functions around each singular point of simple fold
maps into surfaces constructed in (the sketch of the proof of) Proposition \ref{prop:3} and they are natural Morse functions (for general $m \geq 3$). 
Hereafter, we define a {\it most normal} simple fold map $f$ as a normal simple fold map, first defined in the end of Definition \ref{def:5}, such that the restriction to $f^{-1}(N(C))$ is regarded as the product map of a most standard Morse function and the identity map on $C$ for each connected component $C \subset q_f(S(f))$ of the image $q_f(S(f))$ of the singular set $S(f)$ and a suitable small regular neighborhood $N(C)$, which is regarded as a trivial PL bundle over $C$ whose fiber is a
$1$-dimensional polyhedron. Related to Remark \ref{rem:0}, in the case of simple fold maps on $3$-dimensional closed and orientable manifolds into orientable surfaces with no boundaries, normal simple fold maps are most normal. 

We introduce some important fold maps and the restrictions of these fold maps to suitable compact and smooth submanifolds into ${\mathbb{R}}^2$.

\begin{Prop}
\label{prop:6}
For any integer $m \geq 3$ and an integer $i=1,2$, a copy of the unit sphere $S^m$ admits a most normal quasi-trivial round fold map $f$ into ${\mathbb{R}}^2$ such that the following conditions hold.
\begin{enumerate}
\item We can do so that ${\phi}_{{\mathbb{R}}^2}(f(S(f)))=\{x \in {\mathbb{R}}^2 \mid 1 \leq ||x|| \leq 3, ||x|| \in \mathbb{N}\}$ holds in {\rm Definition \ref{def:2}}.
\item ${\phi}_{{\mathbb{R}}^2}(f(F_0(f)))=\{x \in {\mathbb{R}}^2 \mid ||x||=i,3\}$.
\item ${\phi}_{{\mathbb{R}}^2}(f(F_1(f)))=\{x \in {\mathbb{R}}^2 \mid ||x||=3-i\}$.
\end{enumerate}
Note that here in the case $m=3$, we do not need the assumption that the map is {\rm (}most{\rm )} normal, related to Remark \ref{rem:0}.

\end{Prop}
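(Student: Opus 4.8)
The plan is to realize $f$ by ``spinning'' a suitable Morse function on a disk about the origin of ${\mathbb R}^2$, which is the mechanism already implicit in the local constructions in the sketch of the proof of Proposition \ref{prop:3} and in the definition of a doubled most standard map.

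First I would construct a model Morse function $h\colon D^{m-1}\to[\tfrac12,3]$ with $\partial D^{m-1}$ a regular level at the minimum value $\tfrac12$, having exactly three critical points, all interior and at the pairwise distinct values $1,2,3$, such that a regular neighborhood of each critical point carries one of the most standard Morse functions of Definition \ref{def:10} (in the evident version for general $m\geq 3$, where ``pair of pants'' means $S^{m-1}$ with the interiors of three disjoint copies of $D^{m-1}$ deleted, as in the proof of Proposition \ref{prop:3}). For $i=1$ I would prescribe the Morse indices at the values $1,2,3$ to be $0,1,m-1$; for $i=2$, to be $m-2,m-1,m-1$. In each case the handles of indices $0,1$ (resp.\ $m-2,m-1$) form a cancelling pair, so that together with the remaining top-index handle, attached to the collar $\partial D^{m-1}\times[\tfrac12,\tfrac12+\epsilon]$, the total space is again $D^{m-1}$; moreover the level sets run through a short string of copies of $S^{m-2}$ (one, then two, then one, then none, reordered appropriately according to $i$), which is exactly what makes the three local models most standard.

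Next, using $S^m=\partial(D^{m-1}\times D^2)=(D^{m-1}\times S^1)\cup_{S^{m-2}\times S^1}(S^{m-2}\times D^2)$, I would define $f(p,\alpha)=h(p)\,e^{i\alpha}$ on $D^{m-1}\times S^1$ (identifying ${\mathbb R}^2$ with ${\mathbb C}$ and using polar coordinates, which is legitimate since $h\geq\tfrac12>0$) and $f(q,z)=\tfrac12 z$ on $S^{m-2}\times D^2$; the two agree on the common $S^{m-2}\times S^1$. On the first piece $f$ is, up to the polar-coordinate diffeomorphism, the product of $h$ with the identity on $S^1$, hence a simple fold map with $S(f)=S(h)\times S^1$, the fold index over the circle of radius $r_0$ being $\min\{k,m-1-k\}$ for the Morse index $k$ of the corresponding critical point of $h$; on the second piece $f$ is a submersion. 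It then remains to check that $f$ is a round fold map with ${\phi}_{{\mathbb R}^2}$ the identity and $f(S(f))=\{\|x\|\in\{1,2,3\}\}$; that it is simple because the three critical values of $h$ are distinct; that it is normal and most normal because over a regular neighborhood in $W_f$ of each singular circle it is, by the previous step, the product of a most standard Morse function with the identity on $S^1$ (and for $m=3$ the normality hypothesis is itself automatic, by Remark \ref{rem:0}); that it is topologically quasi-trivial because over $\{\|x\|\geq\tfrac12\}$ the Reeb space is $W_h\times S^1$, on which the bundle of Definition \ref{def:9} is the trivial projection to $S^1$, where $W_h$ is the Reeb graph of $h$; and, finally, that evaluating $\min\{k,m-1-k\}$ on the prescribed indices yields $f(F_0(f))=\{\|x\|\in\{i,3\}\}$ and $f(F_1(f))=\{\|x\|=3-i\}$, as required.

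The main obstacle is the first step: exhibiting such an $h$ on $D^{m-1}$ with the prescribed critical values and Morse indices, with every critical point sitting inside a most standard chart, and with the total space genuinely a disk rather than some other manifold. What makes this possible is the cancelling-pair-plus-top-handle decomposition, and the bookkeeping that the sequence of $S^{m-2}$-level sets is compatible at the same time with the most standard local models and with a handle decomposition of $D^{m-1}$, uniformly in $m\geq 3$. Everything afterwards---the smoothness and well-definedness of the spun map, including the corner smoothing along $S^{m-2}\times S^1$, and the verifications of roundness, simplicity, most normality, and quasi-triviality listed above---is routine.
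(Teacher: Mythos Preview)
Your argument is correct, and in fact the paper itself does not supply a proof of Proposition~\ref{prop:6}; it merely states the proposition and points to FIGURE~7~(b) of \cite{kobayashisaeki} for the shape of the Reeb space, treating the construction as well known. Your spinning construction $f(p,\alpha)=h(p)e^{i\alpha}$ on $D^{m-1}\times S^1$, extended by the projection on $S^{m-2}\times D^2$, is exactly the mechanism the paper invokes repeatedly: it is the product-of-a-Morse-function-with-the-identity-on-$S^1$ idea from the sketch of the proof of Proposition~\ref{prop:3}, and the same idea underlies the explicit construction of the round fold map on $S^2\times S^1$ carried out just before Proposition~\ref{prop:8}. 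So your route is not merely compatible with the paper's methods, it \emph{is} the intended method made explicit.

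Two small remarks. First, your phrase ``reordered appropriately according to $i$'' for the sequence of level-set component counts is slightly misleading: in both cases $i=1,2$ the sequence along $[\tfrac12,3]$ is $1,2,1,0$; what changes with $i$ is only where the trivalent vertex of $W_h$ sits (at height $2$ for $i=1$, at height $1$ for $i=2$), which is precisely what places the circle of non-manifold points of $W_f$ at radius $3-i$. Second, your handle-cancellation check that the total space of $h$ is $D^{m-1}$ is the genuinely non-automatic step, and you have identified it correctly; once that is done, roundness, simplicity, most-normality and topological quasi-triviality follow immediately from the product structure, as you say.
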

FIGURE 7 (b) shows the Reeb space of such a map.
\begin{Def}
\label{def:11}
In Proposition \ref{prop:6}, if we restrict $f$ to the submanifold obtained by removing the connected component of the preimage of $\{x \in {\mathbb{R}}^2 \mid i-\frac{1}{2}<||x||< i+\frac{1}{2}\}$ containing a connected component of $({\phi}_{{\mathbb{R}}^2} \circ f)(F_0(f))$ for ${\phi}_{{\mathbb{R}}^2} \circ f$, then we have a smooth map and we call this a {\it bordered doubled most standard} map {\it induced from $S^m$ into the plane}. Furthermore, we call the images of connected components of the boundary for the quotient map to the Reeb space of the resulting map the {\it borders}.
\end{Def}
We immediately have the following proposition by fundamental arguments on Morse functions and fold maps.
\begin{Prop}
\label{prop:7}
In the situation of Definition \ref{def:11},
for $i=1,2$, from a map $f$ in Proposition \ref{prop:6}, we always have a map in {\rm Definition \ref{def:11}} on the product of a copy of the $2$-dimensional unit disk $D^2$ and a copy of the {\rm (}$m-2${\rm )}-dimensional unit sphere $S^{m-2}$.
\end{Prop}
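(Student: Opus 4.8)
The plan is to read off, from the explicit construction of the round fold map $f$ in Proposition \ref{prop:6}, a decomposition of $S^m$ into the standard pieces that already occur in the sketch of the proof of Proposition \ref{prop:3}, and then to delete the piece prescribed in Definition \ref{def:11}. Concretely, I will show that $f$ exhibits $S^m$ as
\[
S^m=(D^2\times S^{m-2})\ \cup\ (S^1\times P)\ \cup\ C_i\ \cup\ C_3,
\]
where $P$ is the $(m-1)$-dimensional manifold obtained by removing two disjoint open balls from $D^{m-1}$ (so $\partial P$ is three copies of $S^{m-2}$), each of $C_i,C_3$ is a copy of $S^1\times D^{m-1}$, and the three pieces other than $D^2\times S^{m-2}$ are glued to it and to one another along copies of $S^1\times S^{m-2}$; then I will check that deleting $C_i$ leaves $D^2\times S^{m-2}$.

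The first step is to pin down the fibrewise structure of $f$. Since $f$ is round it is strongly simple, so $f|_{S(f)}$ is an embedding; together with Proposition \ref{prop:6} this forces $S(f)$ to be a disjoint union of exactly three circles, mapped homeomorphically onto the concentric circles $\{\,||x||=1,2,3\,\}$, the index-$0$ ones over $\{||x||=i\}$ and $\{||x||=3\}$ and the index-$1$ one over $\{||x||=3-i\}$. Away from $S(f)$ the map $f$ is a trivial smooth $S^{m-2}$-bundle over its image, with $f^{-1}(\{||x||>3\})=\varnothing$. Counting sheets from the outside inward: zero outside $\{||x||=3\}$; one over $\{2<||x||<3\}$ (a birth at the outer index-$0$ circle); two over the annular region between $\{||x||=1\}$ and $\{||x||=2\}$; and one over the central disc $\{||x||<1\}$. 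The jump at the index-$1$ circle must be the merging of two spheres into one and not a self-surgery, because a self-surgery would turn a regular fibre $S^{m-2}$ into $S^{m-3}\times S^1$, contradicting Proposition \ref{prop:3}(\ref{prop:3.6}) for $m\ge 4$; for $m=3$ the same sheet counts come out (and the normality hypothesis is not needed there, as noted in Proposition \ref{prop:6}), so the argument is uniform.

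Next I would assemble the pieces, which are exactly the local models used in the sketch of Proposition \ref{prop:3}: the preimage of a small regular neighbourhood of each index-$0$ circle is a trivialized tube $C_\ast\cong S^1\times D^{m-1}$ with $f$ there the product of $\mathrm{id}_{S^1}$ with a most standard Morse function on $D^{m-1}$ (Definition \ref{def:10}); the preimage of a small regular neighbourhood of the index-$1$ circle is $S^1\times P$ with $f$ there the product of $\mathrm{id}_{S^1}$ with a most standard Morse function on $P$, its three $S^1\times S^{m-2}$ boundary components splitting one-plus-two according to the sheet counts; and the preimage of the central disc, carrying a single sheet and no singular points, is a trivial $S^{m-2}$-bundle over a $2$-disc, hence $\cong D^2\times S^{m-2}$. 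Gluing along the $S^1\times S^{m-2}$ faces (absorbing product collars over the regions of constant sheet number) gives the displayed presentation, with $D^2\times S^{m-2}$ attached to one boundary component of $S^1\times P$ and $C_i,C_3$ to the other two; as a check, capping the two components carrying $C_i$ and $C_3$ turns $S^1\times P$ into $S^1\times D^{m-1}$ (a pair of pants with two discs attached is a disc), so $S^m=(D^2\times S^{m-2})\cup_{S^1\times S^{m-2}}(S^1\times D^{m-1})$, the standard genus-one decomposition of $S^m=\partial(D^2\times D^{m-1})$, which also pins down the gluings. For the last step: the component of $f^{-1}(\{\,i-\tfrac12<||x||<i+\tfrac12\,\})$ meeting $S(f)$ over $\{||x||=i\}$ is precisely the open tube $C_i$, so the submanifold of Definition \ref{def:11} is $S^m\setminus C_i$, and
\[
S^m\setminus C_i=(D^2\times S^{m-2})\ \cup\ (S^1\times P)\ \cup\ C_3=(D^2\times S^{m-2})\ \cup\ \bigl(S^1\times(P\cup D^{m-1})\bigr);
\]
since $P\cup D^{m-1}$ (a pair of pants with one disc attached) is $S^{m-2}\times[0,1]$, the second summand is a collar of $S^1\times S^{m-2}=\partial(D^2\times S^{m-2})$, whence $S^m\setminus C_i\cong D^2\times S^{m-2}$, and restricting $f$ gives the asserted map, its single boundary component $S^1\times S^{m-2}$ projecting to the border.

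The step I expect to be the main obstacle is the bookkeeping in the middle: confirming that the central disc carries exactly one sheet (so that its preimage is $D^2\times S^{m-2}$ and not something with more topology) and keeping track of which boundary component of $S^1\times P$ receives each of $D^2\times S^{m-2}$, $C_i$, $C_3$ in the two cases $i=1,2$. This is forced by strong simplicity of round fold maps together with Proposition \ref{prop:3}(\ref{prop:3.6}), and the resulting assembly is certified by comparison with the standard decomposition of $S^m$; the remainder is the routine ``fundamental arguments on Morse functions and fold maps''.
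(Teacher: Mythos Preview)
Your approach is sound and is precisely the sort of ``fundamental argument on Morse functions and fold maps'' that the paper invokes without detail; the paper offers no proof beyond that single sentence, so your decomposition of $S^m$ into the centre piece $D^2\times S^{m-2}$, the pair-of-pants block $S^1\times P$, and the two caps $C_i,C_3$, followed by the observation that deleting $C_i$ leaves $D^2\times S^{m-2}$ up to a collar, is exactly what is intended.

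One correction is needed: your appeal to Proposition~\ref{prop:3}(\ref{prop:3.6}) to rule out a self-surgery at the index-$1$ circle is misplaced, since that item concerns the newly constructed map $f'$ in Proposition~\ref{prop:3}, not the given $f$. The correct justification is already built into Proposition~\ref{prop:6}: the map there is \emph{most normal}, which by definition forces the local model over each index-$1$ circle to be the product of the identity on $S^1$ with a most standard Morse function on the $(m{-}1)$-dimensional analogue of the pair of pants (Definition~\ref{def:10}); this is exactly the merge/split of spheres, not a self-surgery. With that replacement your argument goes through unchanged, and the ``topologically quasi-trivial'' hypothesis you invoke does indeed make the gluings products over $S^1$, so the collar computation at the end is legitimate.
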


Hereafter, we concentrate on smooth maps on $3$-dimensional manifolds into ${\mathbb{R}}^2$ where we can generalize for $m$-dimensional manifolds similarly for general $m \geq 3$.

We introduce another map. For each point in $x \in {\mathbb{R}}^k$, we naturally regard it as a vector here and we consider the vector $x_1-x_2$ for $x_1 \in {\mathbb{R}}^k$ and $x_2 \in {\mathbb{R}}^k$ defined by considering the difference in the vector space. For each vector $x$, $||x||$ is the value of the Euclidean norm at $x$, which is same as $||x||$ presented in the introduction of the first section, denoting the distance of $x$ and the origin $0$ under the standard Euclidean metric.

\begin{Def}
\label{def:12}
Let $m \geq 3$ be an integer and $(i_1,i_2,i_3)$ and $({i_1}^{\prime},{i_2}^{\prime},{i_3}^{\prime})$ be triplets of integers $1$ or $2$. Let $f$ be a most normal strongly simple fold map into ${\mathbb{R}}^2$ such that the following properties are enjoyed.
\begin{enumerate}
\item By a suitable diffeomorphism ${{\phi}^{\prime}}_{{\mathbb{R}}^2}$ on ${\mathbb{R}}^2$, ${{\phi}^{\prime}}_{{\mathbb{R}}^2}(f(S(f)))$ is the disjoint union of the following sets.
\begin{enumerate}
\item $\{x \in {\mathbb{R}}^2 \mid 1 \leq ||x-(0,0)|| \leq 3, ||x-(0,0)|| \in \mathbb{N}\}$.
\item $\{x \in {\mathbb{R}}^2 \mid 1 \leq ||x-(-7,0)|| \leq 3, ||x-(-7,0)|| \in \mathbb{N}\}$.
\item $\{x \in {\mathbb{R}}^2 \mid 1 \leq ||x-(7,0)|| \leq 3, ||x-(7,0)|| \in \mathbb{N}\}$.
\item $\{x \in {\mathbb{R}}^2 \mid ||x||=11,12\}$.
\end{enumerate}
\item ${{\phi}^{\prime}}_{{\mathbb{R}}^2}(f(F_0(f)))$ is the disjoint union of the following sets.
\begin{enumerate}
\item $\{x \in {\mathbb{R}}^2 \mid ||x-(0,0)||=1,i_1+1\}$.
\item $\{x \in {\mathbb{R}}^2 \mid ||x-(-7,0)||=1,i_2+1\}$.
\item $\{x \in {\mathbb{R}}^2 \mid ||x-(7,0)||=1,i_3+1\}$.
\item $\{x \in {\mathbb{R}}^2 \mid ||x||=12\}$.
\end{enumerate}
\item ${{\phi}^{\prime}}_{{\mathbb{R}}^2}(f(F_1(f)))$ is the disjoint union of the following sets.
\begin{enumerate}
\item $\{x \in {\mathbb{R}}^2 \mid ||x-(0,0)||=4-i_1\}$.
\item $\{x \in {\mathbb{R}}^2 \mid ||x-(-7,0)||=4-i_2\}$.
\item $\{x \in {\mathbb{R}}^2 \mid ||x-(7,0)||=4-i_3\}$.
\item $\{x \in {\mathbb{R}}^2 \mid ||x||=11\}$.
\end{enumerate}
\item The Reeb space $W_f$ and a triplet  $({i_1}^{\prime},{i_2}^{\prime},{i_3}^{\prime})$ of integers which are $1$ or $2$ enjoy the following properties.
\begin{enumerate}
\item $W_f-{({{\phi}^{\prime}}_{{\mathbb{R}}^2} \circ \bar{f})}^{-1}(\{x \in {\mathbb{R}}^2 \mid 11 \leq ||x|| \leq 12\})$ consists of exactly two connected components $W_{f,1}$ and $W_{f,2}$.
\item ${({{\phi}^{\prime}}_{{\mathbb{R}}^2} \circ \bar{f})}^{-1}(\{x \in {\mathbb{R}}^2 \mid ||x-(-7,0)|| <1\})$ and ${({{\phi}^{\prime}}_{{\mathbb{R}}^2} \circ \bar{f})}^{-1}(\{x \in {\mathbb{R}}^2 \mid ||x-(0,0)||<1\})$ are connected and in a same connected component of the two connected components $W_{f,1}$ and $W_{f,2}$ if and only if ${i_1}^{\prime}=1$.
\item ${({{\phi}^{\prime}}_{{\mathbb{R}}^2} \circ \bar{f})}^{-1}(\{x \in {\mathbb{R}}^2 \mid ||x-(0,0)||<1\})$ and ${({{\phi}^{\prime}}_{{\mathbb{R}}^2} \circ \bar{f})}^{-1}(\{x \in {\mathbb{R}}^2 \mid ||x-(7,0)||<1\})$ are connected and in a same connected component of the two connected components $W_{f,1}$ and $W_{f,2}$ if and only if ${i_2}^{\prime}=1$.
\item ${({{\phi}^{\prime}}_{{\mathbb{R}}^2} \circ \bar{f})}^{-1}(\{x \in {\mathbb{R}}^2 \mid ||x-(-7,0)||<1\})$ and ${({{\phi}^{\prime}}_{{\mathbb{R}}^2} \circ \bar{f})}^{-1}(\{x \in {\mathbb{R}}^2 \mid ||x-(7,0)||<1\})$ are connected and in a same connected component of the two connected components $W_{f,1}$ and $W_{f,2}$ if and only if ${i_3}^{\prime}=1$.
\end{enumerate}
\end{enumerate}
If we restrict $f$ to the submanifold obtained by removing the connected components of the preimages of the following open sets containing connected components of $F_0(f)$ for ${{\phi}^{\prime}}_{{\mathbb{R}}^2} \circ f$, then we have a smooth map and we call this a {\it bordered doubled most standard} map {\it supporting a pair of pants}.
\begin{itemize}
\item $\{x \in {\mathbb{R}}^2 \mid i_1+\frac{1}{2}<||x-(0,0)||<i_1+\frac{3}{2}\}$.
\item $\{x \in {\mathbb{R}}^2 \mid i_2+\frac{1}{2}<||x-(-7,0)||<i_2+\frac{3}{2}\}$.
\item $\{x \in {\mathbb{R}}^2 \mid i_3+\frac{1}{2}<||x-(7,0)||<i_3+\frac{3}{2}\}$.
\end{itemize}
Furthermore, we call the images of connected components of the boundary for the quotient map to the Reeb space of the resulting map the {\it borders}. 
\end{Def}
We review construction of a round fold map on a copy of $S^2 \times S^1$ in \cite{kitazawa0.1, kitazawa0.2, kitazawa0.4, kitazawa0.5} for example. We argue using a representation graph. Such expositions are not in these papers. However, essentially we argue similarly.
This is the total space of a trivial smooth bundle over $S^2$ whose fiber is a circle. The base space $S^2$ is decomposed into two copies of the $2$-dimensional unit sphere and a manifold diffeomorphic to $S^1 \times D^1$ by two circles smoothly and disjointly embedded in $S^2$. The last manifold, diffeomorphic to $S^1 \times D^1$ is, divided into a copy of the $2$-dimensional unit disk and a pair of pants by a circle smoothly embedded in the interior of the manifold, diffeomorphic to $S^1 \times D^1$. $S^2 \times S^1$ is thus divided into four trivial bundles. This means that this manifold has a representation graph which is connected and has exactly three vertices of degree $1$, exactly one vertex of degree $3$ and exactly four edges. We can construct smooth maps as the projections of the given smooth trivial bundles over copies of $D^2$ on the first two manifolds. 
We set this base space as the disk consisting of all points $x \in {\mathbb{R}}^2$ satisfying $||x|| \leq \frac{21}{2}$. We construct the product map of a most standard Morse function and the identity map on $\{x \in {\mathbb{R}}^2 \mid ||x||=11\}$ onto $\{x \in {\mathbb{R}}^2 \mid \frac{21}{2} \leq ||x|| \leq \frac{23}{2}\}$ on the third manifold and the product map of a most standard Morse function and the identity map on $\{x \in {\mathbb{R}}^2 \mid ||x||=12\}$ onto $\{x \in {\mathbb{R}}^2 \mid \frac{23}{2} \leq ||x|| \leq 12\}$ on the fourth manifold. For these Morse functions here, indices of singular points are $1$ and $0$, respectively. Thus we have local maps on four total spaces of bundles or vertices. By the situation making the identifications between the boundaries of these four manifolds sufficiently simple, we may skip arguments on construction of local maps on the small regular neighborhoods of tori or edges of the graph. Our construction here also gives local maps on the small regular neighborhoods of tori or edges of the graph. In short, we have obtained a desired round fold map on $S^2 \times S^1$ into ${\mathbb{R}}^2$. 

We can apply this to the total space of a general smooth bundle over $S^2$ whose fiber is a circle or a higher dimensional general unit sphere.
This is closely related to a main ingredient of construction of simple fold maps in \cite{saeki2}, followed by \cite{kitazawasaeki}, and related arguments are applied in our proof of Main Theorem \ref{mthm:1} for example.

For the resulting round fold map on $S^2 \times S^1$, we remove a suitable connected component of
 the preimage of each of the following three open subsets of ${\mathbb{R}}^2$.
\begin{itemize}
\item $\{x \in {\mathbb{R}}^2 \mid ||x-(0,0)||< \frac{7}{2}\}$.
\item $\{x \in {\mathbb{R}}^2 \mid ||x-(-7,0)||< \frac{7}{2}\}$.
\item $\{x \in {\mathbb{R}}^2 \mid ||x-(7,0)||< \frac{7}{2}\}$.
\end{itemize}
By constructing (local) suitable product maps as in the sketch of our proof of Proposition \ref{prop:3} or the product maps of most standard Morse functions and the identity map on circles instead and attaching them in a suitable way, we have a new strongly simple fold map $f$ on some new $3$-dimensional closed, connected and orientable manifold with the following proposition.
\begin{Prop}
\label{prop:8}
In the situation of Definition \ref{def:12},
for any triplet $(i_1,i_2,i_3)$ and any triplet $({i_1}^{\prime},{i_2}^{\prime},{i_3}^{\prime})$,
we can have a {\rm (}most{\rm )} normal strongly simple fold map $f:M \rightarrow {\mathbb{R}}^2$ on some new $3$-dimensional closed, connected and orientable manifold $M$ enjoying the presented properties. Furthermore, we can have such a map $f$ such that from this we have a bordered doubled most standard map supporting a pair of pants on the product of a pair of pants and a circle in the presented way.
\end{Prop}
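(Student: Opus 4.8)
The plan is to prove Proposition \ref{prop:8} by a single explicit cut-and-paste construction, in the spirit of the constructions of \cite{saeki2} and \cite{kitazawasaeki}: start from the round fold map $f_0$ on $S^2 \times S^1$ reviewed just above, excise three fibered solid tori, and glue in three local models whose parameters are read off from the prescribed triplets $(i_1,i_2,i_3)$ and $(i_1',i_2',i_3')$.

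First I would pin down the structure of $f_0$. Recall that $S^2 \times S^1$ was cut, along its representation graph, into the trivial $S^1$-bundle over a pair of pants $P$ and three further bundles over copies of $D^2$, and that $f_0$ was assembled leaf by leaf. Composing with a diffeomorphism of $\mathbb{R}^2$ as in Definition \ref{def:2}, I would arrange that the leaf over $P$ is mapped onto a large disk with three disjoint open round holes, the $j$-th hole being $\{x \mid ||x-p_j|| < 7/2\}$ with $p_1=(0,0)$, $p_2=(-7,0)$, $p_3=(7,0)$; that the three $D^2$-leaves are mapped onto disks covering these holes; and that the outermost collar leaves contribute precisely the singular circles of radii $11$ and $12$ about the origin, of indices $1$ and $0$, as in items (1)(d), (2)(d), (3)(d) of Definition \ref{def:12}. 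With this arrangement each $\{||x-p_j||=7/2\}$ is a regular value of $f_0$, and $f_0^{-1}(\{||x-p_j||<7/2\})$ has a connected component $V_j$ which is a fibered solid torus $D^2 \times S^1$ meeting $(S^2 \times S^1)-\mathrm{int}\,V_j$ in a single torus.

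Next I would carry out the surgery: remove $\mathrm{int}\,V_1,\mathrm{int}\,V_2,\mathrm{int}\,V_3$ and glue into each resulting boundary torus the domain of a bordered doubled most standard map induced from $S^3$ into the plane (Definition \ref{def:11}, Proposition \ref{prop:7}), realized on a copy of $D^2 \times S^1$ with its border identified with $\partial V_j$. For the $j$-th model I take the parameter of Proposition \ref{prop:6} to be $i_j$; by that proposition this puts its index-$1$ singular circle at radius $4-i_j$ about $p_j$ and its index-$0$ singular circles at radii $1$ and $i_j+1$, matching items (1)(a)--(c), (2)(a)--(c), (3)(a)--(c) of Definition \ref{def:12}. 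By Propositions \ref{prop:2} and \ref{prop:3} and the local Morse models of Definition \ref{def:10}, the assembled map $f\colon M \to \mathbb{R}^2$ is again a most normal strongly simple fold map; $M$ is closed since every boundary torus has been capped, orientable since each piece is and the gluings are along tori, and connected since $S^2 \times S^1$ and every glued piece are. Hence conditions (1), (2), (3) of Definition \ref{def:12} hold.

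The remaining point, which I expect to be the main obstacle, is to realize the prescribed triplet $(i_1',i_2',i_3')$ and to verify condition (4). Removing $\bar{f}^{-1}(\{11 \le ||x|| \le 12\})$ splits $W_f$ into two components $W_{f,1}, W_{f,2}$; these are the two sheets of $W_{f_0}$ over the central disk, joined through the index-$1$ circle of radius $11$, and what must be controlled is which of them receives each cluster's central cap $\bar{f}^{-1}(\{||x-p_j||<1\})$. Unlike the local data of (1)--(3), this is a global feature of the Reeb space: I would exhibit enough freedom --- in the leaf-by-leaf assembly of $f_0$, namely which of its two central sheets each $D^2$-leaf abuts, and in the identifications along the excised tori --- so that the induced partition of $\{1,2,3\}$ into co-sheeted clusters is the one prescribed by $(i_1',i_2',i_3')$, and then read off the biconditionals of Definition \ref{def:12}(4), checking in particular that the relevant central caps are connected. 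Finally, for the last sentence of the statement, I would keep the trivial $S^1$-bundle over $P$ intact throughout and arrange the three local models so that the excisions of the last paragraph of Definition \ref{def:12}, applied to $f$, cut $M$ along tori back down to exactly that bundle; the restricted map is then the bundle projection of $P \times S^1$ postcomposed with an immersion of $P$ into $\mathbb{R}^2$, i.e.\ a bordered doubled most standard map supporting a pair of pants on $P \times S^1$, in exact parallel with Propositions \ref{prop:6} and \ref{prop:7}.
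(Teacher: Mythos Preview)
Your cut-and-paste strategy is exactly the paper's: start from the round fold map $f_0$ on $S^2\times S^1$ built in the paragraph just above the proposition, remove one connected component of the preimage of each of the three disks $\{||x-p_j||<\tfrac{7}{2}\}$, and glue in local products of most standard Morse functions with the identity on a circle. The paper's argument is terse enough that it does not spell out how $(i_1',i_2',i_3')$ is realized or why the restricted domain is $P\times S^1$, so your added discussion of the two-sheet combinatorics is genuinely supplementary rather than divergent.

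There is, however, a real slip in your final paragraph. In the paper's $f_0$ the piece $P\times S^1$ is \emph{not} mapped onto the central three-holed region; it is sent, via a most standard Morse function times the identity, onto the outer annulus $\{\tfrac{21}{2}\le ||x||\le \tfrac{23}{2}\}$ and contributes the index-$1$ circle at radius $11$, while one $D^2\times S^1$ piece supplies the index-$0$ circle at radius $12$. Consequently the bordered doubled most standard map supporting a pair of pants is \emph{not} ``the bundle projection of $P\times S^1$ postcomposed with an immersion of $P$'': it still carries the singular circles at radii $11$ and $12$ (and, depending on $i_j$, residual singular circles in each cluster), so it cannot be a submersion. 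The domain of the restricted map does turn out to be $P\times S^1$, but the reason is a direct piece count of what the Definition~\ref{def:12} excision removes, not the picture you describe. A smaller bookkeeping issue: Proposition~\ref{prop:6} with parameter $i$ places $F_0$ at radii $i,3$ and $F_1$ at $3-i$, whereas Definition~\ref{def:12} demands $F_0$ at $1,i_j+1$ and $F_1$ at $4-i_j$, so taking ``the parameter of Proposition~\ref{prop:6} to be $i_j$'' does not literally match; you need a radially reflected variant of that model.
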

FIGURE \ref{fig:1} presents the Reeb space of such a map.
\begin{figure}
\includegraphics[width=50mm]{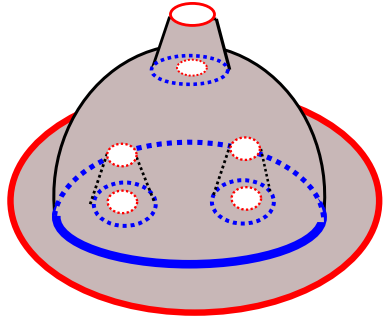}
\caption{The Reeb space of a map $f$ for $(i_1,i_2,i_3)=(1,1,1)$ and $({i_1}^{\prime},{i_2}^{\prime},{i_3}^{\prime})=(2,2,1)$ in Proposition \ref{prop:8}. Red (blue) colored circles are for the values at singular points whose indices are 0 (resp. 1).}
\label{fig:1}
\end{figure}
\begin{Def}
	\label{def:13}
	For an integer $m \geq 3$, a pair $(i_1,i_2)$ of integers $0$ or $2$, and an integer $l \geq i_1+i_2+3$, let $f$ be a most normal and topologically quasi-trivial round fold map on an $m$-dimensional closed and connected manifold into ${\mathbb{R}}^2$ such that the following conditions hold.
	\begin{enumerate}
		\item There exists a diffeomorphism ${\phi}_{{\mathbb{R}}^2}$ satisfying ${\phi}_{{\mathbb{R}}^2}(f(S(f)))=\{x \in {\mathbb{R}}^2 \mid 1 \leq ||x|| \leq l, ||x|| \in \mathbb{N}\}$ as in Definition \ref{def:2}.
		\item ${\phi}_{{\mathbb{R}}^2}(f(F_0(f)))=\{x \in {\mathbb{R}}^2 \mid ||x||=1,1+i_1,l-i_2,l\}$.
		\item ${\phi}_{{\mathbb{R}}^2}(f(F_1(f)))={\phi}_{{\mathbb{R}}^2}(f(S(f)-F_0(f)))$.
		\item For ${\phi}_{{\mathbb{R}}^2} \circ f$ preimages are as follows.
		\begin{enumerate}
			\item The preimage of a point in $\{x \in {\mathbb{R}}^2 \mid l-i_2-1<||x||<l-i_2,l-1<||x||<l\}$ is connected.
			\item The preimage of a point in $\{x \in {\mathbb{R}}^2 \mid l-i_2<||x||<l-i_2+1\}$ consists of exactly two connected components if $i_2=2$.
			\item The preimage of a point in $\{x \in {\mathbb{R}}^2 \mid l-i_2-j-1<||x||<l-i_2-j\}$ consists of exactly $j+1$ connected components for an integer $j$ satisfying $0 \leq j$ and $l-i_2-j-1 \geq 1+i_1$.
			\item The preimage of a point in $\{x \in {\mathbb{R}}^2 \mid i_1<||x||<i_1+1\}$ consists of exactly $l-i_1-i_2$ connected components and that of a point in $\{x \in {\mathbb{R}}^2 \mid 1<||x||<i_1\}$ consists of exactly $l-i_1-i_2-1$ connected components if $i_1=2$.
			\item The preimage of a point in $\{x \in {\mathbb{R}}^2 \mid 0<||x||<1\}$ consists of exactly $l-i_1-i_2-2$ connected components.
		\end{enumerate}
	\end{enumerate}
	If we restrict $f$ to the submanifold obtained by removing the connected components of the preimages of the two following sets containing some connected components of $F_0(f)$ for ${\phi}_{{\mathbb{R}}^2} \circ f$, then we have a smooth map and we call this an {\it S-map supporting an annulus} of {\it type $(i_1,i_2,l,{\epsilon}_1,{\epsilon}_2)$} where ${\epsilon}_i$ is defined in the following way.
	\begin{itemize}
		\item $\{x \in {\mathbb{R}}^2 \mid \frac{1}{2}<||x||<\frac{3}{2}\}$ if $i_1=0$ and either $\{x \in {\mathbb{R}}^2 \mid \frac{1}{2}<||x||<\frac{3}{2}\}$ or $\{x \in {\mathbb{R}}^2 \mid \frac{5}{2}<||x||<\frac{7}{2}\}$ if $i_1=2$. In the case $i_1=0$, we put ${\epsilon}_1:=0$ and in the case $i_1=2$, we put ${\epsilon}_1:=\pm1$ if we remove the connected component of the preimage of $\{x \in {\mathbb{R}}^2 \mid \frac{3}{2}+{\epsilon}_1<||x||<\frac{5}{2}+{\epsilon}_1\}$.
		\item $\{x \in {\mathbb{R}}^2 \mid l-\frac{1}{2}<||x||<l+\frac{1}{2}\}$ if $i_2=0$ and either $\{x \in {\mathbb{R}}^2 \mid l-\frac{1}{2}<||x||<l+\frac{1}{2}\}$ or $\{x \in {\mathbb{R}}^2 \mid l-\frac{5}{2}<||x||<l-\frac{3}{2}\}$ if $i_2=2$. In the case $i_1=0$, we put ${\epsilon}_2:=0$ and in the case $i_1=2$, we put ${\epsilon}_2:=\pm1$ if we remove the connected component of the preimage of $\{x \in {\mathbb{R}}^2 \mid l-\frac{3}{2}-{\epsilon}_2<||x||<l-\frac{1}{2}-{\epsilon}_2\}$.
	\end{itemize}
	Furthermore, we call the images of connected components of the boundary for the quotient map to the Reeb space of the resulting map the {\it borders}.
\end{Def}
An {\it S-map} is first defined in \cite{saeki2}. S-maps supporting annuli are defined here first and regarded as specific S-maps. We present the following theorem as a theorem essentially shown in \cite{saeki2} or a kind of exercises related to the article.

\begin{Thm}[E.g. \cite{saeki2}]
	\label{thm:2}
		\begin{enumerate}
			\item \label{thm:2.1}
			\begin{enumerate}
				\item We have a representation graph collapsing to a point for the manifold of the domain of a map in Definition \ref{def:13}. 
                \item The Reeb space $W_f$ of a map $f$ in Definition \ref{def:13} collapses to a point or a bouquet of finitely many copies of the $2$-dimensional unit sphere $S^2$
			  \item Furthermore, if and only if $l=i_1+i_2+3$ holds, then the Reeb space $W_f$ of a map $f$ in Definition \ref{def:13} collapses to a point.
			  \end{enumerate}
			\item \label{thm:2.2} 
	A manifold diffeomorphic to $S^1 \times S^1 \times D^1$ admits an S-map supporting an annuls of any type $(i_1,i_2,l,{\epsilon}_1,{\epsilon}_2)$ satisfying the assumption in Definition \ref{def:13}.
	\end{enumerate} 
\end{Thm}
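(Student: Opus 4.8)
The plan is to exploit the nested annular structure of a round fold map. Writing $\rho := ||\,\cdot\,|| \circ {\phi}_{{\mathbb{R}}^2} \circ \bar{f} : W_f \to [0,l]$, the polyhedron $W_f$ is filtered by $\rho$, and over each open annulus $A_k := \{k < ||x|| < k+1\}$ the restriction of $\bar{f}$ is, by topological quasi-triviality together with Proposition \ref{prop:2}, a trivial covering onto $A_k$ whose number of sheets is exactly the number of connected components of the corresponding preimage listed in Definition \ref{def:13}(4). Crossing the singular circle of radius $k$, the sheets are glued according to the index: by Proposition \ref{prop:2}(\ref{prop:2.3}), an index-$0$ circle either caps off a sheet by a $2$-disk (the preimage of the disk it bounds contributes a sheet that does not continue inward) or is an interior fold of a single sheet, while an index-$1$ circle either glues three local sheets along a non-manifold circle of $W_f$ as in Proposition \ref{prop:2}(\ref{prop:2.3.3}) or, as happens here in the "middle" range, splits or merges two sheets. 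First I would read off from Definition \ref{def:13}(2),(3),(4) the precise bookkeeping of these gluings and so present $W_f$ as an explicit $2$-dimensional polyhedron.

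For part (1)(a): the same reading of Definition \ref{def:13} shows that $M$ itself is assembled, in the sense of Proposition \ref{prop:5}, from trivial circle bundles over $2$-disks and pairs of pants stacked in the radial direction, the incidence pattern of the pieces containing no cycle; hence a representation graph of $M$ is a finite tree, and a tree collapses to a point. For part (1)(b),(c): starting from the innermost disk $\{\rho < 1\}$, over which $W_f$ is a disjoint union of $2$-disks by Definition \ref{def:13}(4)(e), I would build $W_f$ outward one annulus at a time. Adjoining the annulus $A_k$ together with the next singular circle is, in each case of Proposition \ref{prop:2}(\ref{prop:2.3}), an elementary expansion possibly followed by one collapse across a free face; the only mechanism that produces a cycle that fails to collapse is that, in the range $1+i_1 \le ||x|| \le l-i_2$, a sheet which has split off at one index-$1$ circle is re-absorbed at a later one, the two bounding radial annuli then closing up into an embedded $2$-sphere in $W_f$. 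Counting these, each unit increase of $l$ beyond $i_1+i_2+3$ contributes exactly one such $2$-sphere summand while everything else collapses, so $W_f$ collapses to a bouquet of $l-i_1-i_2-3$ copies of $S^2$; in particular to a point precisely when $l=i_1+i_2+3$. Compatibility of this final collapse with the tree of part (1)(a) is to be checked at the same time.

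For part (2): given a prescribed type $(i_1,i_2,l,{\epsilon}_1,{\epsilon}_2)$, I would exhibit a round fold map as in Definition \ref{def:13} directly, by the circle-bundle construction used in the sketch of Proposition \ref{prop:3} and in \cite{saeki2}. Take the base disk $\{||x||\le l+\frac12\}$, cut it into the $l$ concentric annuli and the central disk, assign a trivial circle bundle over a disk to the central piece, a trivial circle bundle over an annulus to each annular piece across which the sheet count is unchanged, and a trivial circle bundle over a pair of pants to each annular piece across which the count changes by $1$ (realized by the pair-of-pants most standard Morse function of Definition \ref{def:10}), and glue; the index pattern of Definition \ref{def:13}(2),(3) is obtained by the two orientations of the most standard Morse functions, and ${\epsilon}_1,{\epsilon}_2$ record which of the two innermost, respectively outermost, index-$0$ circles is used for the final excision when $i_1$ or $i_2$ equals $2$. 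Removing from the resulting closed manifold the two solid-torus pieces that are the preimages of small annular neighborhoods of the two excised index-$0$ circles leaves, by inspection of the circle-bundle picture, a circle bundle over $S^1\times D^1$ with trivial structure group, hence a manifold diffeomorphic to $S^1\times S^1\times D^1$, on which the restriction of the map is by construction an S-map supporting an annulus of the given type.

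The hard part will be the bookkeeping in part (1): converting the list of preimage-component counts in Definition \ref{def:13}(4) into an honest polyhedral model of $W_f$ (keeping track of which circles are non-manifold and how the local models of Proposition \ref{prop:2}(\ref{prop:2.3.3}) fit together radially), then producing the explicit collapse together with the identification of the surviving summands as $2$-spheres and the "if and only if" in (1)(c). The construction in part (2) is more routine once the pieces are fixed, the only delicate point being to verify that after the two excisions the remaining bundle is genuinely the trivial one over an annulus, i.e. that no nontrivial gluing has been introduced along the tori separating the radial pieces.
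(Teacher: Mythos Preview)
The paper does not give its own proof of this theorem: it introduces Theorem~\ref{thm:2} as ``a theorem essentially shown in \cite{saeki2} or a kind of exercises related to the article'' and then only refers the reader to a figure in \cite{kitazawasaeki} and to \cite{saeki2}. So there is no proof in the paper to compare against; your proposal is in fact more detailed than anything the paper supplies.

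That said, your outline is the natural one and matches the spirit of the constructions the paper uses elsewhere (the sketch of Proposition~\ref{prop:3}, the sketch of Theorem~\ref{thm:1}(\ref{thm:1.3})). Two small points to tighten. First, your description of the mechanism producing the $S^2$ summands (``a sheet which has split off at one index-$1$ circle is re-absorbed at a later one'') does not quite match the preimage counts in Definition~\ref{def:13}(4): going inward through the range $1+i_1 \le ||x|| \le l-i_2$ the number of sheets is strictly increasing, so no sheet is re-absorbed there. The $2$-spheres instead arise from the $l-i_1-i_2-2$ disk sheets sitting over the central region $\{||x||<1\}$, each of which eventually meets an index-$1$ branching circle further out; all but one of these close up into spheres after the evident collapses, which gives the count $l-i_1-i_2-3$ you state. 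Second, in part~(2) you should be explicit that the ``trivial circle bundle over $S^1\times D^1$'' conclusion really uses that at each intermediate torus the gluing can be chosen to be the identity on the $S^1$-fiber (this is exactly the freedom exploited in \cite{saeki2}, and the paper invokes it implicitly via the ``plumbing type'' representation graph). With those two adjustments your plan goes through.
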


For the Reeb space of such a map, see FIGURE 3 of \cite{kitazawasaeki} and see also \cite{saeki2} for example.

Hereafter, for S-maps supporting annuli, we essentially consider the case $l=i_1+i_2+3$ in Definition \ref{def:13}. This respects main ingredients of \cite{kitazawasaeki,saeki2} with \cite{neumann}.

We present a sketch of a proof of Theorem \ref{thm:1} (\ref{thm:1.3}). See \cite{kitazawasaeki} for more rigorous expositions. Main tools and ideas are as follows and they are also important in construction in the proofs of our Main Theorems.
\begin{itemize}
\item Representation graphs of a certain nice type for construction of local maps.
\item Construction of local maps respecting the graphs via methods in \cite{saeki2}. 
\end{itemize}
\begin{proof}[A sketch of a proof of {\rm Theorem \ref{thm:1} (\ref{thm:1.3})}.]
For each vertex $v_{\lambda}$ or the bundle over a pair of pants or a copy of the $2$-dimensional unit disk in Proposition \ref{prop:5} and Definition \ref{def:8}, we construct the product map of a most standard Morse function and the identity map on a circle and compose this with the product map of diffeomorphisms which is a map onto a small closed tubular neighborhood $\{x \in {\mathbb{R}}^2 \mid a_{\lambda}-\frac{1}{2} \leq ||x|| \leq a_{\lambda}+\frac{1}{2}\}$ of the circle $\{x \in {\mathbb{R}}^2 \mid ||x||=a_{\lambda}\}$. More precisely, we can define the family $\{a_{\lambda} \in \mathbb{N}\}_{\lambda}$ of mutually distinct and finitely many positive integers in such way that the differences of distinct numbers are sufficiently large and construct the local maps for these vertices and we do so.

On each edge or the closed tubular neighborhood of the torus in Definition \ref{def:8}, we construct either of the following two.
Note that according to \cite{saeki2} with \cite{neumann}, we can prepare a representation graph of a certain specific type. More precisely, the type is a mild extension of the so-called {\it plumbing type}. This enables us to do construction of local maps as in \cite{saeki2}.

\begin{itemize}
	\item (The composition of) 
	the product map of the projection of a trivial smooth bundle over a closed interval whose fiber is a circle and 
	the identity map on a circle (with a suitable diffeomorphism).
	
	\item (The composition of) an S-map supporting an annulus of the type $(i_1,i_2,i_1+i_2+3,{\epsilon}_1,{\epsilon}_2)$ (with a suitable diffeomorphism). 
\end{itemize}

This completes the proof. We can also see that the resulting round fold map is (most) normal.
\end{proof}

\begin{proof}[A proof of {\rm Main Theorem \ref{mthm:1}}.]
	We consider a representation graph for $M$ of a nice type as in the sketch of our proof of Theorem \ref{thm:1}. 
	Let $\{X_j\}_{j=1}^a$ denote all vertices where $a>0$ is a suitable integer.
	For each vertex $X_j$ or a trivial bundle $X_j$ over the $2$-dimensional unit disk $D^2$ or a pair of pants, we consider a copy of the $2$-dimensional unit sphere or a pair of pants embedded smoothly into a subspace
	${\mathbb{R}}^2 \times L_j \subset {\mathbb{R}}^2 \times \mathbb{R}$ corresponding to the base space of the bundle. 
	We can take the subspaces according to the following rule and do so.
	
	\begin{itemize}
		\item $L_j$ is a one-point set $\{j\}$ if the base space of the trivial bundle is the $2$-dimensional unit disk or equivalently, the vertex $X_j$ is of degree $1$.
		\item $L_j$ is a closed interval $[j-\frac{1}{4},j+\frac{1}{4}]$ if the base space of the bundle is a pair of pants or equivalently, the vertex $X_j$ is of degree $3$.
	\end{itemize}
	
	We may regard these embedded manifolds as subpolyhedra of Reeb spaces of suitable maps of Proposition \ref{prop:7} or \ref{prop:8} in a natural and suitable way where we consider PL embeddings of the Reeb spaces in suitable and natural ways into ${\mathbb{R}}^3$. We can do so that the original smooth maps into ${\mathbb{R}}^2$ are represented as the compositions of the natural maps onto the polyhedra with the restrictions of the canonical projection to the embedded Reeb spaces to ${\mathbb{R}}^2$ (, followed by a suitable diffeomorphism). We may say that this is also due to the topologies of the Reeb spaces and (expositions on local structures of related fold maps and PL maps in) Propositions \ref{prop:2} and \ref{prop:3} for example.
	
	For each edge or each torus $T_{j^{\prime}}$, we consider the small regular neighborhood. 
	
	We find borders such that the preimages are connected components of the boundary of the small regular neighborhood first. We can consider a suitable PL isotopy of the embeddings of the disjoint union of the Reeb spaces of the maps of Proposition \ref{prop:7} or \ref{prop:8} without changing the value of the third component $x_3$ of $(x_1,x_2,x_3) \in {\mathbb{R}}^2 \times \mathbb{R}$ on each point of the embedded space in such a way that the chosen borders are in the boundary of a smoothly embedded copy of the $3$-dimensional unit disk $D^3$ in the complementary set of the disjoint union of the embedded spaces. Note that for this, we need to choose maps in Proposition \ref{prop:7} or \ref{prop:8} and the embeddings of the Reeb spaces suitably beforehand and that we can do so. 
	More precisely, these Reeb spaces are regarded as spaces obtained by attaching annuli to the interiors of copies of the unit disk or pair of pants via PL homeomorphisms from (connected components of) the boundaries onto circles in the original surfaces.
	
	For this, consult also \cite{kitazawa, kitazawa2}. For example, Main Theorem 5 of \cite{kitazawa} studies a generalized case of a case which is essentially same as the case in our present discussion.  
	
	After this deformation by the PL isotopy, we deform the PL embedding by another suitable PL isotopy if we need. We can glue borders directly via PL homeomorphisms or attach the Reeb space of an S-map supporting an annulus along the borders by suitable PL homeomorphisms in ${\mathbb{R}}^3$ where we can consider this polyhedron as an embedded polyhedron and consider this as a suitable one. This is due to some fundamental observations on the topologies of the Reeb spaces of the S-maps supporting annuli, discussed in Theorem \ref{thm:2}, and such observations on the other maps, discussed in Propositions \ref{prop:7} and \ref{prop:8}. In addition, again, by the topologies of the Reeb spaces and (expositions on local structures of related fold maps and PL maps in) Propositions \ref{prop:2} and \ref{prop:3} for example, we can obtain a surjective continuous (PL) map $q_{\bar{M_{\{j^{\prime}\}}}}$ on our new resulting $3$-dimensional compact manifold $\bar{M_{\{j^{\prime}\}}}$ onto our new $2$-dimensional polyhedron $P_{\{j^{\prime}\}}$ in such a way that the following two properties are enjoyed.
	\begin{itemize}
		\item The composition of the map $q_{\bar{M_{\{j^{\prime}\}}}}$ on the resulting $3$-dimensional compact manifold
		$\bar{M_{\{j^{\prime}\}}}$ onto the resulting polyhedron $P_{\{j^{\prime}\}}$ with the restriction of the canonical projection to ${\mathbb{R}}^2$ to this embedded polyhedron is a smooth map. Let $f_{M_{\{j^{\prime}\}}}$ denote this smooth map.
		\item The restrictions of the map $f_{M_{\{j^{\prime}\}}}$ on the resulting $3$-dimensional compact manifold
		$\bar{M_{\{j^{\prime}\}}}$ to the preimages of the original S-map supporting an annulus and the original maps in Proposition \ref{prop:7} or \ref{prop:8} are regarded as the original maps (where we may need to compose these maps with suitable diffeomorphisms on ${\mathbb{R}}^2$ for the modification).
	\end{itemize}
	In this way, we construct a local map on the small regular neighborhood of a torus $T_{j^{\prime}}$ and $\bar{M_{\{j^{\prime}\}}} \subset M$. This is also due to a main ingredient of construction of simple fold maps on graph manifolds of \cite{saeki2} together with the theory \cite{neumann}, which is also presented in the sketch of our proof of Theorem \ref{thm:1}. Due to this for example, if we choose the embeddings of the Reeb spaces of the maps of Proposition \ref{prop:7} or \ref{prop:8} suitably beforehand, then we can do this for the remaining edges or tori one after another by similar arguments. This presents
	a continuous (PL) map $q_{\bar{M_{J^{\prime}}}}$ on a new resulting $3$-dimensional compact manifold $\bar{M_{J^{\prime}}} \subset M$ onto a new $2$-dimensional polyhedron $P_{{J^{\prime}}}$ at each step where $J^{\prime}$ denotes the set of all chosen $j^{\prime}$ for the torus $T_{j^{\prime}}$ or the edge then. We can finally obtain a desired simple fold map $f:M \rightarrow {\mathbb{R}}^2$ on the given graph manifold $M$ with the quotient map $q_f:M \rightarrow W_f$. Furtheremore, we can obtain this in such that the quotient map $q_f$ is represented as the composition of a PL embedding into ${\mathbb{R}}^3$ with the canonical projection to ${\mathbb{R}}^2$. 
	Related to this argument, compare it to Main Theorem 5 and its proof in \cite{kitazawa} again. 
	This completes the proof.
\end{proof}

The {\it Heegaard genus} of a $3$-dimensional closed, connected and orientable manifold is an important invariant for these manifolds. 

We do not expect much knowledge on this and closely related notions. To known more rigorously, precisely and systematically,
see \cite{hempel} for example. 

Related to this notion, we have the following theorem.

\begin{Thm}[Main Theorem \ref{mthm:2}.]
	\label{thm:3}
	A graph manifold $M$ admits a topologically quasi-trivial round fold map into ${\mathbb{R}}^2$ such that the Reeb space can be embedded into ${\mathbb{R}}^3$ and $S^3$ in the PL category if and only if there exists a representation graph for $M$ which is planar. More generally, we have the following two.
	\begin{enumerate}
		\item 
	    \label{thm:3.1}	
		If a graph manifold $M$ admits a topologically quasi-trivial round fold map into ${\mathbb{R}}^2$ such that the Reeb space can be embedded in a $3$-dimensional closed, connected and orientable manifold whose Heegaard genus is $g>0$ in the PL category, then there exists a representation graph for $M$ which can be embedded in a closed, connected and orientable surface of genus $g$.
		\item
		\label{thm:3.2}
		 Conversely, suppose that there exists a representation graph for $M$ which can be embedded in a closed, connected and orientable surface of genus $g>0$. Then there exists a topologically quasi-trivial round fold map $f:M \rightarrow {\mathbb{R}}^2$ such that in the PL category, the Reeb space $W_f$ can be embedded in some $3$-dimensional closed, connected and orientable manifold obtained by gluing
		 $S^1 \times {\Sigma}_{g,1}$ and $D^2 \times S^1$ between the boundaries by a suitable PL homeomorphism where ${\Sigma}_{g,1}$ denotes a surface obtained by removing the interior of a copy of the $2$-dimensional unit disk smoothly embedded in a closed, connected and orientable surface of genus $g$.
	\end{enumerate}
\end{Thm}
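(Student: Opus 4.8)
### Proof Proposal for Theorem~\ref{thm:3} (Main Theorem~\ref{mthm:2})

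\textbf{Overall strategy.}
The plan is to split the argument along the stated ``if and only if'' and, more generally, the two implications \eqref{thm:3.1} and \eqref{thm:3.2}. For the forward direction \eqref{thm:3.1} I would start from a topologically quasi-trivial round fold map $f\colon M\to{\mathbb R}^2$ whose Reeb space $W_f$ PL-embeds into a $3$-dimensional closed connected orientable manifold $Y$ of Heegaard genus $g$. The key observation is that $W_f$ deformation retracts onto (equivalently, collapses to) a $1$-dimensional subcomplex: by Theorem~\ref{thm:2}, the Reeb spaces of the building blocks (S-maps supporting annuli, bordered doubled most standard maps) each collapse to a point or to a bouquet of $2$-spheres, and the circle-bundle structure of the pieces implies that $W_f$ collapses to a graph $G$ which can be taken to be a representation graph for $M$ in the sense of Definition~\ref{def:8}. (This is where the correspondence between vertices of degree $1$/$3$ and copies of $D^2$/pairs of pants, established in the proof of Main Theorem~\ref{mthm:1}, is used.) Since collapsing is an ambient operation up to PL isotopy when performed inside a regular neighborhood, and since a regular neighborhood of $G$ inside $Y$ is a handlebody-like thickening, the embedding $W_f\hookrightarrow Y$ restricts to a PL embedding $G\hookrightarrow Y$; pushing $G$ into the spine of a Heegaard handlebody $H_g\subset Y$ and using that a graph embeds in $H_g$ if and only if it embeds in $\partial H_g={\Sigma}_g$ (a standard fact: a graph in a handlebody can be isotoped into the boundary surface, e.g. via a generic projection argument) yields a representation graph for $M$ embedded in a closed orientable surface of genus $g$. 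Taking $g$ such that $Y=S^3$ (Heegaard genus $0$, so ${\Sigma}_0=S^2$) gives planarity, which is the ``only if'' half of the first sentence.

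\textbf{Converse direction \eqref{thm:3.2}.}
Here I would start from a representation graph $G$ for $M$ that embeds in a closed orientable surface of genus $g$, and reconstruct the round fold map. The idea is to repeat the construction in the proof of Theorem~\ref{thm:1}\eqref{thm:1.3} and Main Theorem~\ref{mthm:1}, but to be careful about \emph{where} the embedded Reeb space lands in ${\mathbb R}^3$. Concretely: a genus-$g$ surface with one boundary component ${\Sigma}_{g,1}$ contains an embedded copy of $G$ as a spine-type subcomplex (thicken the embedding of $G$ in ${\Sigma}_g$, remove a disk in a face). Then $S^1\times{\Sigma}_{g,1}$ is a natural ``ambient'' $4$-dimensional... no --- rather, I would use that the pieces of the construction (products of surfaces with circles, plus S-maps supporting annuli) assemble, via the plumbing-type representation graph of \cite{saeki2} with \cite{neumann}, into a simple fold map whose Reeb space is built from disks, pairs of pants, and annuli glued along circles exactly following the combinatorics of $G$. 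Because $G$ embeds in ${\Sigma}_{g,1}$, these $2$-dimensional pieces can be embedded in $S^1\times{\Sigma}_{g,1}$ (the annular ``edge'' pieces follow the edges of $G$; the ``vertex'' Reeb spaces sit in small balls near the vertices, which is possible by Propositions~\ref{prop:2}, \ref{prop:3}, \ref{prop:7}, \ref{prop:8}), giving a PL embedding of $W_f$ into $S^1\times{\Sigma}_{g,1}$ and hence into the closed manifold obtained by capping off with $D^2\times S^1$ along the boundary torus, as claimed. The round fold map itself is recovered as the composition of the quotient map onto this embedded $W_f$ with the canonical projection, exactly as in Main Theorem~\ref{mthm:1}(2); topological quasi-triviality comes for free from the round structure of the outermost piece, as noted after Theorem~\ref{thm:1}.

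\textbf{Where the difficulty lies.}
The routine parts are the local model constructions (already packaged in Propositions~\ref{prop:7}, \ref{prop:8} and Theorem~\ref{thm:2}) and the collapsing observation. The genuine obstacle is the \emph{ambient} control: in \eqref{thm:3.1}, showing that an abstract PL embedding $W_f\hookrightarrow Y$ can be upgraded to an embedding of the $1$-skeleton into a Heegaard surface requires that the collapse $W_f\searrow G$ be realizable by an ambient isotopy in $Y$, and that $G$ then be isotopic into $\partial H_g$; the second step is classical for handlebodies but I should state it carefully since $Y$ is not a handlebody but only has a genus-$g$ Heegaard splitting (one pushes $G$ off the other handlebody's cocores). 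In \eqref{thm:3.2}, the delicate point is simultaneously embedding \emph{all} the $2$-dimensional Reeb-space pieces into $S^1\times{\Sigma}_{g,1}$ so that they meet only along the prescribed gluing circles and so that the projection to ${\mathbb R}^2$ remains the canonical one after a global diffeomorphism of ${\mathbb R}^2$ --- this is the analogue of the PL-isotopy bookkeeping in the proof of Main Theorem~\ref{mthm:1}, and I expect it to require essentially the same argument (choosing the embeddings of the building-block Reeb spaces suitably in advance, then adjusting by PL isotopies edge-by-edge), now constrained to stay inside the $S^1$-fibered thickening of the embedded graph. Comparison with \cite{kitazawa, kitazawa2} (in particular Main Theorem~5 of \cite{kitazawa}) should supply the needed isotopy-extension input.
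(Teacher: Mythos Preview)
Your argument for the forward direction \eqref{thm:3.1} contains a fatal gap. After collapsing $W_f$ to a graph $G$ and observing that $G$ therefore embeds in $Y$, you invoke the claim that ``a graph embeds in $H_g$ if and only if it embeds in $\partial H_g = {\Sigma}_g$.'' This is false: every finite graph PL-embeds in every $3$-manifold (and in particular in $H_0 = D^3$), so your statement would force every graph to be planar. The passage from a $1$-complex embedded in a $3$-manifold to a surface of controlled genus carries no information whatsoever; the obstruction to embedding must come from the $2$-dimensional structure of $W_f$, not from its $1$-skeleton. The paper's proof does exactly this: it uses that for a topologically quasi-trivial round fold map the Reeb space has the form $(S^1 \times K) \cup (\text{disks attached along } S^1 \times V)$ for a graph $K$ (the fiber of the bundle in Definition~\ref{def:9}) and a finite set $V \subset K$, and then appeals to the main theorem of \cite{munozozawa} on the genus of multibranched surfaces to conclude that such a $2$-complex embeds in a $3$-manifold of Heegaard genus $g$ only if $K$ embeds in a genus-$g$ surface. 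The graph $K$ is then shown to yield (after a collapse) a representation graph for $M$. You should replace your collapse-to-a-graph argument with this multibranched-surface obstruction.

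For the converse \eqref{thm:3.2} your plan is broadly compatible with the paper's, but more elaborate than necessary. The paper does not re-run the piecewise embedding bookkeeping of Main Theorem~\ref{mthm:1}; instead it observes directly that for the round fold map produced by the construction in (the sketch of the proof of) Theorem~\ref{thm:1}\eqref{thm:1.3}, $W_f$ is PL homeomorphic to $(S^1 \times K_g) \cup (\text{disks along } S^1 \times V_g)$ with $K_g$ the given representation graph embedded in ${\Sigma}_{g,1}$, and this model polyhedron visibly sits inside $S^1 \times {\Sigma}_{g,1}$ capped off by $D^2 \times S^1$. No edge-by-edge PL isotopy adjustment is needed here; the product structure does the work.
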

\begin{proof}
We first show the fact presented in Main Theorem \ref{thm:2}.
For a graph manifold for which there exists a representation graph being planar, 
we have a planar representation graph of the plumbing type or the type presented before the sketch of the proof of Theorem \ref{thm:1} (\ref{thm:1.3}). This is due to the related arguments in \cite{saeki2} with \cite{neumann}.

By construction of a round fold map in \cite{kitazawasaeki} or the sketch of the proof of Theorem \ref{thm:1} (\ref{thm:1.3}), we have a normal and topologically quasi-trivial round fold map such that the fiber of the bundle in Definition \ref{def:9} is also a planar graph. 

For this, note also that by the global topology of the Reeb space of an S-map supporting an annulus and Theorem \ref{thm:2} (\ref{thm:2.1}) for example, the new graph of the fiber in Definition \ref{def:9} can be obtained as a graph collapsing to a graph isomorphic to the original representation graph given in the beginning. Note also that in general, it is not isomorphic to the original representation graph. This is also important in similar scenes. 

As a result, the Reeb space can be embedded in ${\mathbb{R}}^3$ in the PL category. Rigorously, in our proof of (\ref{thm:3.2}) later, take $g:=0$ and glue ${\Sigma}_{0,1}$ and $S^1 \times D^2$ in a suitable way to know this.


Conversely, if a topologically quasi-trivial round fold map whose Reeb space can be embedded in ${\mathbb{R}}^3$ in the PL category is given, then by fundamental arguments in \cite{saeki2} with \cite{kitazawasaeki} or (important arguments in) our proof of Theorem \ref{thm:1} (\ref{thm:1.3}), together with
a main theorem of \cite{munozozawa}, the fiber of the bundle in Definition \ref{def:9} must be planar and this gives a representation graph being planar.

We show the additional facts. (\ref{thm:3.1}) can be shown similarly and we omit a rigorous proof.

We show (\ref{thm:3.2}). For a suitable topologically quasi-trivial round fold map $f:M \rightarrow {\mathbb{R}}^2$, we can obtain a polyhedron PL homeomorphic to $W_f$ in the following way.
\begin{itemize}
\item First take $S^1 \times K_g$ where $K_g$ is a finite and connected graph we can embed in a closed, connected and orientable surface of genus $g$ in the PL category and as a result compact and connected surface ${\Sigma}_{g,1}$ obtained by removing the interior of a copy of the $2$-dimensional unit disk $D^2$ smoothly embedded in the surface.
\item Attach the disjoint union of finitely many copies of the $2$-dimensional unit disk $D^2$ by a PL homeomorphism from the boundary onto $S^1 \times V_g \subset S^1 \times K_g$ where $V_g$ is a finite subset. 
\end{itemize}
We have a polyhedron PL homeomorphic to $W_f$ and in the PL category, this can be embedded in a $3$-dimensional closed, connected and orientable manifold obtained by gluing the product manifold $S^1 \times {\Sigma}_{g,1}$ and $D^2 \times S^1$ on the boundaries in a suitable way. As presented before, in the case $g=0$, we can have a copy of the $3$-dimensional unit sphere as the $3$-dimensional manifold.

This completes the proof. 
\end{proof}

\begin{Rem}
	\label{rem:1}
	\cite{matsuzakiozawa} implies that a $2$-dimensional normal polyhedron, defined in Definition \ref{def:5}, can be embedded in some $3$-dimensional closed, connected and orientable manifold in the PL category. According to this, a $2$-dimensional Reeb space may not be embedded in any $3$-dimensional closed, connected and orientable manifold, whereas it is shown to be embedded in ${\mathbb{R}}^4$ (in the PL category). According to the discussion there, FIGURE 7 (c) of \cite{kobayashisaeki}, presented before, is for this.
\end{Rem}
\begin{Rem}
	\label{rem:2}
	In \cite{kitazawasaeki}, a {\it directed} round fold map is defined as a round fold map $f:M \rightarrow {\mathbb{R}}^2$ on a $3$-dimensional closed, connected and orientable manifold $M$ into ${\mathbb{R}}^2$ such that as we go to the origin $0 \in {\mathbb{R}}^2$ from the outermost connected component of $({\phi}_{{\mathbb{R}}^2} \circ f)(S(f))$, the numbers of connected components of the preimages increase where we abuse the notation of Definition \ref{def:2}. A graph manifold is shown to admit such a round fold map if and only if there exists a representation graph for the manifold which collapses to a point. Our Main Theorem \ref{mthm:2} and Theorem \ref{thm:3} characterize new wider classes of graph manifolds. 
	For the $3$-dimensional unit sphere, the total space of a smooth bundle over the $2$-dimensional unit sphere $S^2$ whose fiber is a circle and Lens spaces, representation graphs collapsing to points exist. As other examples, {\it Seifert manifolds over $S^2$}, which we do not define rigorously, enjoy the property.

    There exist graph manifolds for which no representation graphs collapsing to points exist and for which planar representation graphs exist according to \cite{kitazawasaeki,neumann}.

\end{Rem}

In addition to and related to Remarks \ref{rem:1} and \ref{rem:2}, we present concluding remarks.

According to \cite{neumann}, we have a complete invariant for graph manifolds whose values are finite graphs with some additional data such as integers. We call such a graph with additional data for a graph manifold $M$ a {\it normal form} of $M$. The graph is different from our representation graph. However, in some respects, they resemble. For example, we can know that if the graph in the normal form does not collapse to a point, then representation graphs for the manifold cannot collapse to points. Related to our Main Theorem \ref{mthm:2}, we can also know that if the graph in the normal form is not planar, then representation graphs for the manifold cannot be planar. We can have such manifolds by related theory. Related to Theorem \ref{thm:3}, for graphs we cannot embed in ${\mathbb{R}}^2$ or $S^2$ and we can embed in a closed, connected and orientable surface of genus $g>0$ in the PL category, we can argue similarly.

We close the paper by explaining about invariants via the embeddability of the Reeb spaces of simple fold maps into ${\mathbb{R}}^2$ in $3$-dimensional closed, connected and orientable manifolds.

\begin{itemize}
\item For the Reeb space of a simple fold map into ${\mathbb{R}}^2$ on a fixed graph manifold, we can define the minimal Heegaard genus for $3$-dimensional closed, connected and orientable manifolds where we can embed this (in the PL category). We have the minimal integer for such Reeb spaces. This value is always $0$ by virtue of Theorem \ref{thm:1}. This gives a trivial invariant for graph manifolds. 
\item For the Reeb space of a topologically quasi-trivial round fold map into ${\mathbb{R}}^2$ on a fixed graph manifold, we can define the minimal Heegaard genus for $3$-dimensional closed, connected and orientable manifolds where we can embed this (in the PL category). We have the minimal integer for such Reeb spaces. This value may not be $0$ by virtue of Theorem \ref{thm:3} (with \cite{neumann}). This gives a non-trivial invariant for graph manifolds. 
\end{itemize}

\section{Acknowledgment and data availability.}
The author is a member of JSPS KAKENHI Grant Number JP17H06128 "Innovative research of geometric topology and singularities of differentiable mappings" (Principal Investigator: Osamu Saeki) and the present work is supported by this. 

We declare that all data supporting our present study essentially are in the present paper. 


\begin{thebibliography}{30}
\bibitem{costantinothurston} F. Costantino and D. Thurston, \textsl{$3$-manifolds efficiently bound $4$-manifolds}, J. Topol. 1 (2008),
703--745.
\bibitem{golubitskyguillemin} M. Golubitsky and V. Guillemin, \textsl{Stable mappings and their singularities}, Graduate Texts in Mathematics (14), Springer-Verlag (1974).
\bibitem{hempel} J. Hempel, \textsl{3-Manifolds}, AMS Chelsea Publishing, 2004. 
\bibitem{hiratukasaeki} J. T. Hiratuka and O. Saeki, \textsl{Triangulating Stein factorizations of generic maps and Euler Characteristic formulas}, RIMS Kokyuroku Bessatsu B38 (2013), 61--89. 
\bibitem{ikeda} H. Ikeda, \textsl{Acyclic fake surfaces}, Topology 10. (1971), 9--36.
\bibitem{ishikawakoda} M. Ishikawa and Y. Koda, \textsl{Stable maps and branched shadows of $3$-manifolds}, Mathematische Annalen 367 (2017), no. 3, 1819--1863, arXiv:1403.0596.
\bibitem{kitazawa0.1} N. Kitazawa, \textsl{On round fold maps} (in Japanese), RIMS Kokyuroku Bessatsu B38 (2013), 45--59.
\bibitem{kitazawa0.2} N. Kitazawa, \textsl{On manifolds admitting fold maps with singular value sets of concentric spheres}, Doctoral Dissertation, Tokyo Institute of Technology (2014).
\bibitem{kitazawa0.3} N. Kitazawa, \textsl{Fold maps with singular value sets of concentric spheres}, Hokkaido Mathematical Journal Vol.43, No.3 (2014), 327--359.
\bibitem{kitazawa0.4} N. Kitazawa, \textsl{Constructions of round fold maps on smooth bundles}, Tokyo J. of Math. Volume 37, Number 2, 385--403, arxiv:1305.1708.
\bibitem{kitazawa0.5} N. Kitazawa, \textsl{Round fold maps and the topologies and the differentiable structures of manifolds admitting explicit ones}, submitted to a refereed journal, arXiv:1304.0618 (the title has changed).%

\bibitem{kitazawa} N. Kitazawa, \textsl{Branched surfaces homeomorphic to Reeb spaces of simple fold maps}, submitted to a refereed journal, arxiv:2006.02037v10.
\bibitem{kitazawa2} N. Kitazawa, \textsl{Simple polyhedra homeomorphic to Reeb spaces of stable fold maps}, submitted to a refereed journal, arxiv:2006.15096.
\bibitem{kitazawasaeki} N. Kitazawa and O. Saeki, \textsl{Round fold maps on $3$-manifolds}, to apper in Algebraic and Geometric Topology after a refereeing process, arxiv:2105.00974.
\bibitem{kobayashisaeki} M. Kobayashi and O. Saeki, \textsl{Simplifying stable mappings into the plane from a global viewpoint}, Trans. Amer. Math. Soc. 348 (1996), 2607--2636. 
%
\bibitem{levine} H. I. Levine, \textsl{Elimination of cusps}, Topology 3 (1965), suppl. 2, 263--296.
\bibitem{matsuzakiozawa} S. Matsuzaki and M. Ozawa, \textsl{Genera and minors of multibranched surfaces}, Topology Appl. 230 (2017), 621--638.
\bibitem{milnor} J. Milnor, \textsl{Morse Theory}, Annals of Mathematics Studies AM-51, Princeton University Press; 1st Edition (1963.5.1).
\bibitem{milnor2} J. Milnor, \textsl{Lectures on the h-cobordism theorem}, Math. Notes, Princeton Univ. Press, Princeton, N.J. 1965.
\bibitem{moise} E E. Moise, \textsl{Affine Structures in $3$-Manifold{\rm :} V. The Triangulation Theorem and Hauptvermutung}, Ann. of Math., Second Series, Vol. 56, No. 1 (1952), 96--114.
\bibitem{munozozawa} M. E. Munoz and M. Ozawa, \textsl{The maximum and minimum genus of a multibranched surface}, Topology and its Appl. (2020) 107502, arXiv:2005,06765.
\bibitem{neumann} W. D. Neumann, \textsl{A calculus for plumbing applied to the topology of complex surface singularities and degenerating complex curves}, Trans. Amer. Math. Soc. 268 (1981), 299--344.
\bibitem{reeb} G. Reeb, \textsl{Sur les points singuliers d\`{u}ne forme de Pfaff completement integrable ou d’une fonction numerique}, -C. R. A. S. Paris 222 (1946), 847--849. 
\bibitem{saeki0} O. Saeki, \textsl{Notes on the topology of folds}, J. Math. Soc. Japan Volume 44, Number 3 (1992), 551--566.
\bibitem{saeki1} O. Saeki, \textsl{Topology of special generic maps of manifolds into Euclidean spaces}, Topology Appl. 49 (1993), 265--293.
\bibitem{saeki2} O. Saeki, \textsl{Simple stable maps of $3$-manifolds into surfaces}, Topology 35, No.3 (1996), 671--698.

\bibitem{shiota} M. Shiota, \textsl{Thom's conjecture on triangulations of maps}, Topology 39 (2000), 383--399.
\bibitem{thom} R. Thom, \textsl{Les singularites des applications differentiables}, Ann. Inst. Fourier (Grenoble) 6 (1955-56), 43--87.
\bibitem{turaev} V. G. Turaev, \textsl{Shadow links and face models of statistical mechanics}, J. Differential Geom. 36 (1992), 35--74.
\bibitem{whitney} H. Whitney, \textsl{On singularities of mappings of Euclidean spaces: I, mappings of the plane into the plane}, Ann. of Math. 62 (1955), 374--410.
\end{thebibliography}
\end{document}